\newtheorem{theorem}[equation]{Theorem}
\newtheorem{prop}[equation]{Proposition}
\newtheorem{lemma}[equation]{Lemma}
\newtheorem{cor}[equation]{Corollary}
\theoremstyle{remark}
\newtheorem{remark}[equation]{Remark}
\theoremstyle{definition}
\newtheorem{defn}[equation]{Definition}
\numberwithin{equation}{subsection}
\newcommand{\R}{\mathbb{R}}
\newcommand{\Rd}{\mathbb{R}^d}
\newcommand{\Rdm}{\mathbb{R}^{d - 1}}
\newcommand{\Rt}{\mathbb{R}^2}
\newcommand{\N}{\mathbb{N}}
\newcommand{\cC}{\mathbb{C}}
\DeclareMathOperator{\spana}{span}
\title{Eigenvalue inequalities for mixed Steklov problems}
\author[R. Ba\~nuelos]{Rodrigo Ba\~nuelos}\thanks{R. Ba\~nuelos was supported in part by NSF Grant
\# 0603701-DMS}
\address{Rodrigo Ba\~nuelos, Department of Mathematics,
Purdue University, West Lafayette, IN 47907, USA}
\email{banuelos@math.purdue.edu}
\author[T. Kulczycki]{Tadeusz Kulczycki}\thanks{T. Kulczycki was supported in part by MNiSW grant \# N N201 373136.}
\address{Tadeusz Kulczycki, Institute of Mathematics, Polish Academy of Sciences, ul. Kopernika 18, 51-617 Wroc{\l}aw, Poland \newline Institute of Mathematics, Technical University of Wroc{\l}aw, Wybrzeze Wyspianskiego 27, 50-370 Wroc{\l}aw, Poland}
\email{tkulczycki@impan.pl}
\author[I. Polterovich]{Iosif Polterovich}\thanks{I. Polterovich was
supported in part by NSERC, FQRNT and Canada Research Chairs
program}
\address{Iosif Polterovich, D\'e\-par\-te\-ment de math\'ematiques et de
sta\-tistique, Univer\-sit\'e de Mont\-r\'eal, CP 6128 succ.
Centre-Ville, Mont\-r\'eal, QC  H3C 3J7, Canada}
\email{iossif@dms.umontreal.ca}
\author[B.~Siudeja]{Bart{\l}omiej Siudeja}
\address{Bart{\l}omiej Siudeja, Department of Mathematics, University of Illinois at Urbana-Champaign, 1409 W. Green Street, Urbana, IL 61801, USA}
\email{siudeja@illinois.edu}
\begin{document}
\begin{abstract}

We extend some classical inequalities between the Di\-ri\-chlet and
Neumann eigenvalues of the Laplacian to the context of mixed
Steklov--Dirichlet and Steklov--Neumann eigenvalue problems. The
latter one is also known as the sloshing problem, and has been
actively studied for more than a century due to its importance in
hydrodynamics. The main results of the paper are applied to obtain
certain geometric information about nodal sets of sloshing
eigenfunctions. The key ideas of the proofs include domain
monotonicity for eigenvalues of mixed Steklov problems, as well as
an adaptation of Filonov's method developed originally to compare
the Dirichlet and Neumann eigenvalues.



\end{abstract}

\maketitle

\section{Introduction and main results}
\subsection{Mixed Steklov problems}\label{mixed}
Let $W$ be a bounded domain in $\Rd$ satisfying the following
assumptions:


\begin{enumerate}
\item[(I)]
{\it $W$ is Lipschitz and $W \subset \{(x,y): x \in \Rdm, \, \, y <
0\}$.}

\smallskip

\item[(II)]{\it Its boundary $\partial{W}$ consists of two sets $F$
and $B$ with $B = \partial{W} \setminus F$ and $F=F'\times \{0\}
\subset \Rdm \times \{0\},$ where  $F'$ is a bounded Lipschitz
domain in $\Rdm$ (see Figure \ref{domain}).}
\end{enumerate}
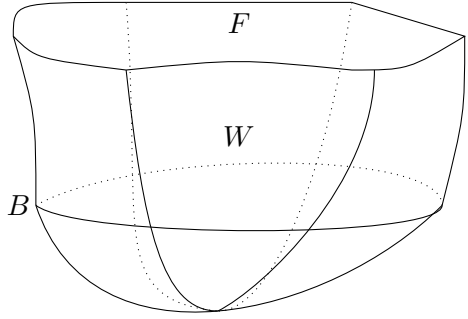
\begin{figure}[t]
  \begin{center}
\beginpgfgraphicnamed{mixed_steklov-9_pic1}
\begin{tikzpicture}[scale=1.5]
      \clip (-2.2,1.1) rectangle (2.2,-1.8);
       \draw (-1,1) -- (1,1);
       \path (1,1) arc (90:0:1 and 0.3) coordinate (a);
       \path (-1,1) arc (90:180:1 and 0.3) coordinate (b);
       \draw (a) .. controls +(0,-0.7) .. ++(-0.2,-1.5) coordinate (c);
       \draw (b) .. controls +(0.2,-0.7) .. ++(0.2,-1.5) coordinate (d);
       \path (a) arc (0:-90:1 and 0.3) coordinate (e);
       \path (b) arc (180:270:1 and 0.3) coordinate (e2);
       \draw (-1,1) .. controls (-2,1) .. (b) .. controls +(0.2,-0.2) .. (e2) .. controls +(1,0.1) .. (e) .. controls +(0.5,0) .. (a) -- (1,1);
       \draw[dotted] (c) .. controls +(0,0.5) and +(0.9,0.5) .. (d) coordinate [pos=0.4] (f);
       \draw (c) .. controls +(0,-0.3) and +(0.4,-0.3) .. (d) coordinate [pos=0.7] (g);
       \draw (d) .. controls +(0.5,-1.5) and +(-1,-1) .. (c) coordinate [pos=0.5] (h);
       \draw (e2).. controls (g) and +(-0.5,0) .. (h);
       \draw[dotted] (1,1).. controls (f) and +(0.5,0) .. (h);
       \draw (e) ++ (0.2,0) .. controls +(0,-1) and +(0.4,0.2) .. (h);
       \draw[dotted] (-1,1) .. controls (g) and +(-1,0.1) .. (h);
       \draw (0,1) node[below] {\small $F$};
       \draw (0,0) node[below] {\small $W$};
       \draw (d) node[left=-2pt] {\small $B$};
    \end{tikzpicture}
\endpgfgraphicnamed

  \end{center}
  \caption{\it Domain $W$ with boundary $\partial W=F\cup B$.}
  \label{domain}
\end{figure}

Consider the following two eigenvalue problems on the domain $W$:

\begin{enumerate}
\item
The mixed Steklov--Neumann problem:
\begin{equation}
\label{Neumann}
\left\{
\begin{array}{lr}
\displaystyle
\Delta v (x,y) = 0, &\quad (x,y) \in W, \\
\displaystyle
\frac{\partial  v}{\partial y}(x,0) =  \mu\, v(x,0), &\quad (x,0) \in F, \\
\displaystyle \frac{\partial  v}{\partial \nu}(x,y) = 0, & \quad
(x,y) \in B.
\end{array}
\right.
\end{equation}
\item
The  mixed Steklov--Dirichlet problem:
\begin{equation}
\label{Dirichlet}
\left\{
\begin{array}{lr}
\displaystyle
\Delta u (x,y) = 0, &\quad (x,y) \in W, \\
\displaystyle
\frac{\partial  u }{\partial y}(x,0) =  \lambda\, u (x,0), &\quad (x,0) \in F, \\
\displaystyle u (x,y) = 0, & \quad (x,y) \in B.
\end{array}
\right.
\end{equation}
\end{enumerate}




We note that because of the Lipschitz boundary, the $(d -
1)$-dimensio\-nal Lebesgue measure (surface area measure) $\sigma$
is well defined on $\partial{W}$ and that  the outward unit normal
vector field $\nu$ is well defined at almost all points of $B$ with
respect to $\sigma$. We understand that the equality
$\partial{v}/\partial{\nu} = 0$ in (\ref{Neumann}) is satisfied for
all points $(x,y) \in B$ for which $\nu$ is defined. At the same
time, as indicated in Remark \ref{weak} below, for the weak
formulation of the Steklov--Neumann problem  one does not need to
assume that $B$ is Lipschitz.

It is well known (see, for example,  \cite[p. 69]{Br}, \cite[Theorem
1, p. 270]{Mo}, \cite[p. 34, p. 250]{KKr})  that under our
assumptions the Steklov--Neumann eigenvalue problem (\ref{Neumann})
has discrete spectrum $\{\mu_n\}_{n = 1}^{\infty}$,
$$
0 = \mu_1 < \mu_2 \le \mu_3 \le \ldots \to \infty.
$$
Note that the first nonzero Steklov--Neumann eigenvalue is denoted
by $\mu_2$ (some other papers use a different convention). The
eigenvalues $\mu_n$ admit the following variational
characterization:
\begin{equation}
\label{varneum} \mu_n=\inf_{V_n\subset H^1(W)} \,\sup_{0 \neq v \in
V_n} \frac{\int_W |\nabla v(x,y)|^2\,dx\,dy}{\int_{F'} v^2(x,0)\,
dx},
\end{equation}
 where the
infimum is taken over all $n$-dimensional subspaces $V_n$ of the
Sobolev space $H^1(W)$. The corresponding eigenfunctions we denote
by $v_n$, $n=1,2,\dots$.

Similarly, it is known  
(see \cite{A}) that the Steklov--Dirichlet eigenvalue problem
(\ref{Dirichlet}) has discrete spectrum $\{\lambda_n\}_{n =
1}^{\infty}$,
$$
0 < \lambda_1 \le \lambda_2 \le \lambda_3 \le \ldots \to \infty
$$
and the eigenvalues  admit the following variational
characterization:
\begin{equation}
\label{vardir} \lambda_n=\inf_{U_n\subset H_0^1(W,B)} \,\sup_{0 \neq
u \in U_n} \frac{\int_W |\nabla u(x,y)|^2\,dx\,dy}{\int_{F'}
u^2(x,0)\, dx},
\end{equation}
where the infimum is taken over all $n$-dimensional subspaces $U_n$
of the space $H_0^1(W,B)=\{u \in H^1(W): \, u \equiv 0 \,\,\,
\text{on} \,\, B\}.$ The corresponding eigenfunctions we denote by
$u_n$, $n=1,2,\dots$.
\begin{remark}
\label{weak} One may weaken the assumption (I) that $W$ is a
Lipschitz domain and still guarantee that the problems
\eqref{Dirichlet} and \eqref{Neumann} have discrete spectrum. In
fact, in the Steklov--Dirichlet case, no regularity of the boundary
is required. In the Steklov--Neumann case it suffices to assume that
$F'$ is Lipschitz and that there exists a Lipschitz domain $V
\subset W$ such that $\partial V \cap \{y=0\}=F'$. In both cases the
result follows immediately  from domain monotonicity for mixed
Steklov eigenvalues (see sections \ref{monotdir:sec} and
\ref{monotneum:sec}) and \cite[Theorem 4.5.2]{Davies}.
\end{remark}
\subsection{Main results}
Let $W \subset \mathbb{R}^d$ be a domain satisfying the assumptions
(I) and (II). Following \cite[section 3.2.1]{KMV}, we say that $W$ satisfies the
{\it (standard) John's
condition} if $W\subset F' \times
(-\infty,0)$.
\begin{defn}
We say that $W$ satisfies the  {\it weak John's condition} if
\begin{equation}\label{condition}
  \int_W e^{ay} \, dx \, dy \le \frac{|F'|}{a} , \mbox{ for any }a > 0,
\end{equation}
 where $|F'|$ is the $(d - 1)$-dimensional Lebesgue measure of $F'$.
\end{defn}
It is easy to check that the standard John's condition implies the
weak John's condition (the converse is not true, as
seen from the
example constructed in section \ref{vase}). Indeed, if $W \subset F'
\times (-\infty,0)$, then for any $a
> 0$,
$$
\int_W e^{ay} \, dx \, dy \le \int_{F'} \, dx \int_{-\infty}^0
e^{ay} \, dy =  \frac{|F'|}{a}.
$$

Let us formulate the main results of the paper.
\begin{theorem}
\label{main} Consider the eigenvalue problems \eqref{Neumann} and
\eqref{Dirichlet} on a domain $W \subset \Rd$ satisfying the weak
John's condition \eqref{condition}. Then for any $n \in \N$ we have
\begin{eqnarray*}
\mu_{n + 1} < \lambda_n  \,\,\, {\rm if} \quad d \ge 3, \\
\mu_{n + 1} \le \lambda_n  \,\,\,  {\rm if} \quad d = 2.
\end{eqnarray*}
\end{theorem}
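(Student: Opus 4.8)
The plan is to adapt Filonov's variational argument, which compares Dirichlet and Neumann eigenvalues of the Laplacian, to the mixed Steklov setting. Recall Filonov's key idea: to bound $\mu_{n+1}$ from above by $\lambda_n$ using the min-max characterization \eqref{varneum}, one builds an $(n+1)$-dimensional trial subspace of $H^1(W)$ out of the first $n$ Steklov--Dirichlet eigenfunctions $u_1,\dots,u_n$ together with one extra carefully chosen harmonic function $\varphi$ on which the Rayleigh quotient is controlled. The natural candidate for the extra function, suggested by the exponential weight in the weak John's condition \eqref{condition}, is $\varphi(x,y)=e^{ay}\cos(a\cdot x)$ or, more simply in order to keep $\varphi$ real-valued and harmonic with the right boundary behaviour on $F$, a plane-wave type harmonic function of the form $\varphi(x,y)=e^{a\cdot(x,y)}$ with a suitable real vector chosen so that $\Delta\varphi=0$ and $\partial\varphi/\partial y = a\,\varphi$ on all of $\{y=0\}$; this makes $\varphi$ an ``eigenfunction'' for the Steklov condition on $F$ with eigenvalue exactly $a$, for every $a>0$.

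The steps, in order, are as follows. First I would record that $u_1,\dots,u_n\in H^1_0(W,B)\subset H^1(W)$ vanish on $B$ and satisfy the Steklov--Dirichlet equations, so for any linear combination $u=\sum c_i u_i$ one has, by Green's formula and orthogonality, $\int_W|\nabla u|^2 = \sum \lambda_i c_i^2 \int_{F'} u_i^2 \le \lambda_n \int_{F'} u^2$. Second, I would set $V_{n+1}=\operatorname{span}\{u_1,\dots,u_n,\varphi\}$ and verify it is genuinely $(n+1)$-dimensional: since $\varphi>0$ everywhere while each $u_i$ vanishes on the nonempty set $B$, $\varphi$ cannot lie in the span of the $u_i$. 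Third — the heart of the estimate — for a general element $w=u+t\varphi$ of $V_{n+1}$ I would expand the Rayleigh quotient. Using that $\varphi$ is harmonic with $\partial\varphi/\partial y=a\varphi$ on $\{y=0\}$ and that the $u_i$ vanish on $B$, the cross terms $\int_W \nabla u\cdot\nabla\varphi$ reduce via Green's formula to boundary integrals over $F'$ of the form $a\int_{F'} u\,\varphi$; the same manipulation gives $\int_W|\nabla\varphi|^2 = a\int_{F'}\varphi^2$, and crucially the weak John's condition \eqref{condition} yields $\int_W|\nabla\varphi|^2 \le a\cdot a^{-1}|F'|$-type control — more precisely it lets one compare $\int_W|\nabla\varphi|^2$ against $\int_{F'}\varphi^2$ with the favourable constant $a$, not just an inequality in the wrong direction. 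Putting the pieces together, one gets $\int_W|\nabla w|^2 = \int_W|\nabla u|^2 + 2t\int_W\nabla u\cdot\nabla\varphi + t^2\int_W|\nabla\varphi|^2 \le \lambda_n\int_{F'}u^2 + 2t\,a\int_{F'}u\varphi + t^2 a\int_{F'}\varphi^2 \le \max(\lambda_n,a)\int_{F'}w^2$, at least after absorbing cross terms; choosing $a<\lambda_n$ (in dimension $d\ge 3$ one can also extract strictness because the weak John inequality is then strict for $\varphi$ not constant, which it never is) gives $\sup_{w\in V_{n+1}}\mathcal R(w)\le \lambda_n$, hence $\mu_{n+1}\le\lambda_n$, with strict inequality when $d\ge 3$. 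Finally, for $d=2$ one argues the limiting/degenerate case separately and only gets the non-strict bound, as the statement asserts.

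The main obstacle I anticipate is handling the cross term $2t\,a\int_{F'}u\varphi$: unlike in Filonov's original Dirichlet--Neumann comparison where the extra function can be taken orthogonal to the eigenfunctions on the relevant boundary, here $\varphi$ need not be $L^2(F')$-orthogonal to the $u_i$, so one cannot simply drop it. The fix is either (i) to replace $\varphi$ by its projection $\tilde\varphi = \varphi - \sum_i (\int_{F'} u_i\varphi/\int_{F'}u_i^2)\,u_i$ off the span of the $u_i$ in $L^2(F')$ — but then $\tilde\varphi$ is no longer harmonic, so one must instead (ii) keep $\varphi$ harmonic and carefully estimate the full quadratic form on $V_{n+1}$, showing its associated matrix pencil $\int_W|\nabla\cdot|^2 - \max(\lambda_n,a)\int_{F'}(\cdot)^2$ is nonpositive by a direct computation exploiting $\lambda_i\le\lambda_n$ and the weak John bound on the $\varphi$-block, together with a Cauchy--Schwarz/diagonalization argument on the remaining $2\times 2$ blocks. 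Pinning down the precise sign and the strictness in $d\ge 3$ versus the borderline equality in $d=2$ is where the real work lies; everything else is bookkeeping with Green's formula and the min-max principle.
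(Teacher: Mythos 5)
Your overall strategy --- a Filonov-type trial subspace spanned by $u_1,\dots,u_n$ together with a harmonic exponential whose energy is controlled by the weak John's condition --- is indeed the approach the paper takes, but two essential steps of your argument are missing or incorrect. The first is the cross term, which you correctly identify as the crux but then do not resolve. The fix is neither to choose $a<\lambda_n$ and ``absorb'' the cross terms (the required inequality $(a-\lambda_n)\bigl(2t\int_{F'}u\varphi+t^2\int_{F'}\varphi^2\bigr)\le 0$ fails for suitable $t$, so your bound by $\max(\lambda_n,a)\int_{F'}w^2$ does not close), nor to project $\varphi$; it is to take the Steklov parameter of the exponential \emph{equal} to $\mu=\lambda_n$, namely $\varphi=e^{i\omega x}e^{\mu y}$ with $\omega\in\Rdm$, $|\omega|=\mu$. (A real exponential $e^{a\cdot(x,y)}$ is never harmonic unless constant, so one must use this complex family or its real and imaginary parts.) Then Green's formula gives $\int_W\nabla u\cdot\overline{\nabla\varphi}=\mu\int_{F'}u\,\overline{\varphi}\,dx$ because $u$ vanishes on $B$ and $\partial_y\varphi=\mu\varphi$ on $F$, so the cross term in the gradient form is \emph{exactly} $\mu$ times the cross term in $\int_{F'}|u+t\varphi|^2$; combined with $\int_W|\nabla u|^2\le\mu\int_{F'}u^2$ and the weak John bound $\int_W|\nabla\varphi|^2=2\mu^2\int_We^{2\mu y}\le\mu\int_{F'}|\varphi(x,0)|^2$, this yields $\int_W|\nabla w|^2\le\mu\int_{F'}|w|^2$ for \emph{every} $w$ in the trial space, with no sign conditions and no orthogonalization.

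The second gap is the strictness for $d\ge3$. It cannot come from strictness of \eqref{condition}: that inequality can be an equality (e.g.\ for $W=F'\times(-\infty,0)$), and in any case $\sup_{w\in V_{n+1}}\mathcal{R}(w)\ge\mathcal{R}(u_n)=\lambda_n$, so no strict upper bound on the supremum over your subspace is possible. The paper instead enlarges the trial space to $U\dot{+}K_N(\mu)\dot{+}\{c\,e^{i\omega x}e^{\mu y}\}$, where $K_N(\mu)$ is the full Steklov--Neumann eigenspace at $\mu$; a separate lemma ($H_0^1(W,B)\cap K_N(\mu)=\{0\}$, proved by extending by zero to the half-space and invoking unique continuation) makes the sum direct, and for $d\ge3$ the sphere $|\omega|=\mu$ provides infinitely many linearly independent exponentials, so one can be chosen outside $U\dot{+}K_N(\mu)$. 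Counting dimensions then gives $\Lambda_N(\mu)\ge k+\dim K_N(\mu)+1$, hence $\#\{j:\mu_j<\mu\}\ge k+1$ and the strict inequality; for $d=2$ only $\omega=\pm\mu$ are available, which is precisely why only the non-strict bound is obtained there. Finally, your linear-independence argument via ``$\varphi>0$'' fails for $e^{ay}\cos(ax)$, which changes sign; independence must be argued as above.
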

The proof of Theorem \ref{main} is based on an adaptation of the
argument due to Filonov \cite{F}. It is presented in section
\ref{main:proof}. Note that the weak John's condition is essential
for Theorem \ref{main} to hold, see sections \ref{spherical:sec} and
\ref{icefishing}.
\begin{theorem}
\label{main2} Consider the eigenvalue problems \eqref{Neumann} and
\eqref{Dirichlet} on a domain $W \subset \mathbb{R}^d$, $d \ge 2$,  satisfying
the standard John's condition. Then for any $n \in \N$ we have
\begin{equation}
\label{johnbound} \mu_{n + 1} < \lambda_n.
\end{equation}
Moreover, if $F' \subset \mathbb{R}^{d-1}$ is a convex set, then
\begin{equation}
\label{convex} \mu_{n + d-1} < \lambda_n.
\end{equation}
\end{theorem}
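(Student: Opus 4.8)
The plan is to derive both parts of Theorem~\ref{main2} from the corresponding Neumann--Dirichlet eigenvalue inequalities for $F'$, using domain monotonicity to replace $W$ by a finite cylinder (note that \eqref{johnbound} for $d\ge3$ is in any case already contained in Theorem~\ref{main}, since the standard John's condition implies \eqref{condition}). Since $W$ is bounded and $W\subset F'\times(-\infty,0)$, we have $W\subset C_L:=F'\times(-L,0)$ for $L$ large enough, and the $F$-parts of the two domains coincide. Hence, by domain monotonicity for mixed Steklov eigenvalues (sections~\ref{monotdir:sec} and~\ref{monotneum:sec}),
$$\mu_k(W)\le\mu_k(C_L),\qquad \lambda_k(W)\ge\lambda_k(C_L)\qquad(k\in\N).$$

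Second, I would compute the spectra of $C_L$ by separation of variables. If $\{\nu_k\}$ and $\{\zeta_k\}$ are the Neumann and the Dirichlet eigenvalues of $F'\subset\Rdm$, then the Steklov--Neumann eigenvalues of $C_L$ are $\sqrt{\nu_k}\,\tanh(L\sqrt{\nu_k})$ and the Steklov--Dirichlet eigenvalues are $\sqrt{\zeta_k}\,\coth(L\sqrt{\zeta_k})$, the enumeration being preserved because $t\mapsto\sqrt{t}\,\tanh(L\sqrt{t})$ and $t\mapsto\sqrt{t}\,\coth(L\sqrt{t})$ are strictly increasing on $(0,\infty)$. Since $\tanh<1<\coth$ on $(0,\infty)$ and $\nu_k>0$ for $k\ge2$ (as $F'$ is connected), this gives the strict bounds $\mu_{n+1}(W)<\sqrt{\nu_{n+1}(F')}$, $\mu_{n+d-1}(W)<\sqrt{\nu_{n+d-1}(F')}$ and $\lambda_n(W)>\sqrt{\zeta_n(F')}$. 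Consequently, \eqref{johnbound} follows from the Filonov inequality $\nu_{n+1}(F')\le\zeta_n(F')$ \cite{F}, and \eqref{convex} follows once we establish
$$\nu_{n+d-1}(F')\le\zeta_n(F')\qquad\text{for convex }F'\subset\Rdm.$$

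To prove this last inequality I would run a Filonov-type argument in $\Rdm$ adapted to convexity. Let $u_1,\dots,u_n$ be an $L^2(F')$-orthonormal set of Dirichlet eigenfunctions for $\zeta_1,\dots,\zeta_n$ and put $\mathcal L:=\spana\{u_1,\dots,u_n,\partial_{x_1}u_n,\dots,\partial_{x_{d-1}}u_n\}\subset H^1(F')$. Each $\partial_{x_j}u_n$ solves $-\Delta(\partial_{x_j}u_n)=\zeta_n\,\partial_{x_j}u_n$, and $\dim\mathcal L=n+d-1$: a relation $\sum_j a_j\partial_{x_j}u_n=\sum_i c_i u_i$ restricted to $\partial F'$ becomes $(\partial_\nu u_n)(a\cdot\nu)\equiv0$, which forces $a=0$ (since $\partial_\nu u_n$ cannot vanish on an open portion of $\partial F'$, by unique continuation) and then $c=0$. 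For $v=u+\partial_a u_n$ with $u\in\spana\{u_j\}$ and $a\in\Rdm$, two integrations by parts (using $u_j|_{\partial F'}=0$ and $-\Delta\,\partial_a u_n=\zeta_n\,\partial_a u_n$) yield $\int_{F'}|\nabla v|^2\le\zeta_n\int_{F'}v^2+\int_{\partial F'}(\partial_a u_n)\,\partial_\nu(\partial_a u_n)\,d\sigma'$. As $u_n$ vanishes on $\partial F'$, there $\nabla u_n=(\partial_\nu u_n)\nu$, and expanding $\partial_\nu(\partial_a u_n)=a^{T}(\operatorname{Hess}u_n)\,\nu$ via the decomposition $a=(a\cdot\nu)\nu+a_{\mathrm{tan}}$ and the Gauss--Weingarten relations I expect to arrive at the identity
$$\int_{\partial F'}(\partial_a u_n)\,\partial_\nu(\partial_a u_n)\,d\sigma'=-\frac12\int_{\partial F'}(\partial_\nu u_n)^2\bigl(H\,(a\cdot\nu)^2+\operatorname{II}(a_{\mathrm{tan}},a_{\mathrm{tan}})\bigr)\,d\sigma',$$
where $H$ and $\operatorname{II}$ are the mean curvature and second fundamental form of $\partial F'$ with respect to the outward normal. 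Convexity of $F'$ makes $H\ge0$ and $\operatorname{II}\ge0$, so this boundary term is $\le0$; hence the Rayleigh quotient of every $v\in\mathcal L$ is $\le\zeta_n$, and the min-max principle gives $\nu_{n+d-1}(F')\le\zeta_n(F')$. If $\partial F'$ is merely Lipschitz, one first treats smooth strictly convex domains and then passes to the limit, using continuity of the eigenvalues under Hausdorff convergence of convex sets.

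The step I expect to be the main obstacle is the curvature identity for $\int_{\partial F'}(\partial_a u_n)\partial_\nu(\partial_a u_n)\,d\sigma'$ and its nonpositivity: this is precisely where convexity enters, and it requires careful bookkeeping with the intrinsic geometry of $\partial F'$ (via a tangential integration by parts that uses $\operatorname{div}_{\partial F'}(a_{\mathrm{tan}})=-(a\cdot\nu)H$ and $\nabla_{\partial F'}(a\cdot\nu)=S(a_{\mathrm{tan}})$) together with the boundary regularity of $u_n$. All the other ingredients---domain monotonicity, the explicit diagonalization of $C_L$, the counting in the Filonov test space, and the approximation of Lipschitz convex domains---are standard.
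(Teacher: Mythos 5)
Your proposal is correct and follows essentially the same route as the paper's proof: reduce to the cylinder $F'\times(-L,0)$ via domain monotonicity (Propositions \ref{monot:dir} and \ref{monot:neum}), diagonalize the cylinder by separation of variables, and invoke the classical Neumann--Dirichlet inequalities on $F'$ from \cite{Fri,F} and \cite{LW}, with $\tanh<1<\coth$ supplying the strictness. The only divergence is that the paper simply cites Levine--Weinberger \cite{LW} for the convex case $\tilde{\mu}_{n+d-1}\le\tilde{\lambda}_n$, whereas you re-derive it; your curvature identity for the boundary term and the sign argument via $H\ge 0$, $\operatorname{II}\ge 0$ are indeed the correct ones, so this adds self-containedness but no essentially new content.
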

The proof of Theorem \ref{main2} presented in section
\ref{main2:proof} is quite short: it uses domain monotonicity for
eigenvalues of mixed Steklov problems (see sections
\ref{monotdir:sec} and \ref{monotneum:sec}), the properties of mixed
Steklov eigenvalues for cylindrical domains (see section
\ref{cylindrical}), and the classical inequalities between the
Dirichlet and Neumann eigenvalues \cite{Fri, F,
LW}. 
\begin{remark}
\label{rem:inf}
Theorems \ref{main} and \ref{main2} are also valid for unbounded
domains $W$ satisfying the weak and the standard John's conditions,
respectively,  provided the problems \eqref{Neumann}
and\eqref{Dirichlet} have discrete spectra. In order to guarantee
that the spectrum is discrete, one has to impose an additional
constraint: the solutions must have a gradient decaying sufficiently
fast at infinity (see \cite{HTW}).
A classical example of a sloshing problem on an unbounded domain is
the ``ice fishing problem'' described in  section \ref{icefishing}.
\end{remark}
\subsection{Discussion}
\label{discussion}  The eigenvalue problem \eqref{Neumann} has
important applications to hydrodynamics and is also known as the
{\it sloshing problem} (see, for example, \cite{I} and references
therein).

For $d = 3$,
it models free fluid oscillations in a container $W$ with bottom $B$
and a free surface of a steady fluid $F$ (see Figure 1). This
problem was first studied by Euler \cite{E} as early as 1761 and has
since been the topic of a great number of papers. We refer to
\cite{FK} for a historical review of this subject. Earlier results
on the sloshing problem are described by Lamb \cite{L} in his book
{\it Hydrodynamics}.  For more recent developments, the reader may
consult the books \cite{KKr} and \cite{KMV}, as well as the papers
\cite{KoK}, \cite{KKM}, and \cite{KM}.  The sloshing problem is the
main motivation to study \eqref{Neumann} and, in particular, it
justifies our assumptions on the domains $W$, $F$ and $B$.

For $d = 2$,  the eigenvalue problem (\ref{Neumann}) describes
oscillations of a $2$-dimensional free fluid in a channel with
uniform cross-section $W$.  
Here $B$ is the uniform cross-section of the bottom of the channel
and $F$ is the uniform cross-section of the free surface of the
steady fluid. Free fluid oscillations are assumed here to be
$2$-dimensional and identical for all the cross-sections of the
channel. In \cite{KK}, some properties of the first nontrivial
eigenfunction for the $2$-dimensional sloshing problem were
established. To obtain these properties, the inequality $\mu_2 \le
\lambda_1$ was proved there for the case $d=2$. It was conjectured
in \cite[Conjecture 4.3]{KK} that the inequality $\mu_{n + 1} \le
\lambda_n$ should hold for $d = 2$ (note that in \cite{KK} a
different notation $\nu_n=\mu_{n+1}$ was used). Theorem \ref{main}
for $d = 2$ gives a positive answer to this
conjecture. 

Eigenvalue problems \eqref{Neumann} and \eqref{Dirichlet} are used
as well to model some other physical processes. For instance, they
describe the stationary heat distribution in $W$ under the
conditions that the heat flux through $F$ is proportional to the
temperature (see \cite{Br}), and the part $B$ of the boundary is
either perfectly insulated (in \eqref{Neumann}) or kept under zero
temperature (in \eqref{Dirichlet}).

The boundary value problems (\ref{Neumann}) and (\ref{Dirichlet}) also have
interesting probabilistic interpretations
in terms of jump processes on $F$ which arise as traces of Brownian
motion in $W$. Roughly speaking, $\mu_n$ and $v_n|_{F}$ are the
eigenvalues and eigenfunctions of the generator of the jump process
which is the trace on $F$ of the Brownian motion in $W$ with
reflection on $\partial{W}$. Similarly, $\lambda_n$ and $u_n|_{F}$
are the eigenvalues and eigenfunctions of the generator of the jump
process which is the trace on $F$ of the Brownian motion in $W$ with
killing on $B$ and reflection on $F$. The connection between the
mixed Steklov problem (\ref{Dirichlet}) and the eigenvalues and
eigenfunctions of the generator of the $d$-dimensional Cauchy
process (which is the trace of the $(d+1)$-dimensional Brownian
motion) in some domains is described in detail in \cite{BK}.


Finally, it is worth pointing out here that Steklov type eigenvalue problems
have attracted considerable attention in recent years. For some of this
literature, see
 \cite{Br}, \cite{DS}, \cite{A}, \cite{KoK}, \cite{KKM},
\cite{KM}, \cite{GP}, \cite{FS}.
\subsection{Nodal sets of sloshing eigenfunctions}
Let $$\mathcal{N}_f = \{x|\,f(x)=0 \}$$ denote the {\it nodal set}
of a function $f$. The following lemma is a simple consequence of
domain monotonicity for eigenvalues of mixed Steklov problems.
Recall that $B = \partial{W} \setminus F$ and  set $B_0 =
\partial{W} \setminus \overline{F}$.
\begin{lemma}
\label{lemma:nodal}
 Let $\phi$ be an eigenfunction of the sloshing problem
\eqref{Neumann} on a domain $W$ satisfying the assumptions (I) and
(II). Suppose that $\phi$ corresponds to an eigenvalue $\mu \le
\lambda_1$, where $\lambda_1$ is the first eigenvalue of the
Steklov--Dirichlet problem \eqref{Dirichlet}. Let $C \subset
\mathcal{N}_\phi$ be a connected component of the
nodal set of
$\phi$. Then $C \cap B_0 \neq \emptyset$. Moreover, if $d=2$ then $C
\cap
\partial F' = \emptyset$.
\end{lemma}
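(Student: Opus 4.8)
The plan is to argue by contradiction using domain monotonicity for the Steklov--Dirichlet eigenvalue. Suppose $C$ is a connected component of $\mathcal{N}_\phi$ with $C \cap B_0 = \emptyset$. Since $\overline{C}$ is closed and disjoint from the open set $B_0 = \partial W \setminus \overline F$, the set $\overline{C}$ meets $\partial W$ only in $\overline F$ (the boundary of $W$ is the disjoint-in-the-relevant-sense union of $F$ and $B_0$ up to $\partial F'$). Consider the nodal domains of $\phi$, i.e.\ the connected components of $W \setminus \mathcal{N}_\phi$. Because $\mu > 0$ (the eigenfunction $\phi$ is nonconstant, as $\mu \le \lambda_1$ but we may assume $\phi$ is not the trivial first sloshing eigenfunction --- if $\mu = 0$ then $\phi$ is constant and has empty nodal set, so there is nothing to prove), $\phi$ changes sign, so there is at least one nodal domain $\Omega \subset W$ on whose boundary $C$ lies. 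I would like to use $\Omega$ (or a piece of it cut off by $C$) as a test domain on which $\phi$ restricts to a Steklov--Dirichlet eigenfunction with eigenvalue $\mu$.

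The key step: let $\Omega$ be a nodal domain of $\phi$ in $W$ such that $C \subset \partial \Omega$. Set $F_\Omega = \partial\Omega \cap F$ and $B_\Omega = \partial\Omega \setminus F_\Omega$. On $B_\Omega$, the part coming from $C \subset \mathcal N_\phi$ carries $\phi = 0$, and the part coming from $B$ carries $\partial\phi/\partial\nu = 0$ (the Neumann condition from \eqref{Neumann}). Since $C \cap B_0 = \emptyset$ by assumption, in fact $B_\Omega \subset \mathcal N_\phi \cup (\text{a subset of } \partial F')$, so that $\phi$ vanishes on all of $B_\Omega$ (a set of boundary measure zero in $\partial F'$ being negligible). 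Hence $\phi|_\Omega \in H_0^1(\Omega, B_\Omega)$ is harmonic, satisfies $\partial\phi/\partial y = \mu\,\phi$ on $F_\Omega \subset F$, and vanishes on $B_\Omega$; it is therefore an eigenfunction of the Steklov--Dirichlet problem on $\Omega$ with eigenvalue $\mu$. In particular $\mu \ge \lambda_1(\Omega)$, the first Steklov--Dirichlet eigenvalue of $\Omega$.

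Now I invoke domain monotonicity for Steklov--Dirichlet eigenvalues (section \ref{monotdir:sec}): since $\Omega \subset W$ with $F_\Omega \subset F$, one has $\lambda_1(\Omega) \ge \lambda_1(W) = \lambda_1$. Actually the monotonicity must be applied with care because enlarging the domain from $\Omega$ to $W$ may also enlarge the Steklov part; the relevant statement is that $\lambda_1$ is monotone under inclusions that do not shrink the ``free'' part and keep the rest Dirichlet, which is exactly the configuration here since $\partial\Omega \setminus F_\Omega$ carries the Dirichlet condition and $F_\Omega \subset F = F' \times \{0\}$. This yields $\mu \ge \lambda_1(\Omega) \ge \lambda_1$, contradicting the strict inequality needed --- so I should be slightly more careful: from $\mu \le \lambda_1$ and $\mu \ge \lambda_1(\Omega) \ge \lambda_1$ we get $\mu = \lambda_1 = \lambda_1(\Omega)$, and then $\phi|_\Omega$ would be a \emph{first} Steklov--Dirichlet eigenfunction on $\Omega$, hence of constant sign on $\Omega$ --- which is consistent, but the equality case $\lambda_1(\Omega) = \lambda_1(W)$ forces $\Omega = W$ (strict monotonicity when the domain is genuinely smaller, which follows from unique continuation / the maximum principle since a first eigenfunction cannot vanish on an interior piece of $\partial\Omega$ lying inside $W$). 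Since $\Omega \subsetneq W$ is a proper nodal domain, this is the desired contradiction; therefore $C \cap B_0 \neq \emptyset$.

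For the final assertion when $d = 2$: here $F' \subset \mathbb R$ is an interval and $\partial F'$ is a two-point set, so $C \cap \partial F' = \emptyset$ will follow from a local analysis at the two corner points where $F$ meets $B$. I expect the main obstacle to be precisely this boundary-regularity issue --- ruling out that a nodal line of $\phi$ terminates at an endpoint of the sloshing surface. The plan is to use the known asymptotic behaviour of sloshing eigenfunctions near the intersection points of $F$ and $B$ (or, more robustly, a reflection/barrier argument): near such a corner point $p \in \partial F'$, after a conformal change of variables straightening the boundary, $\phi$ behaves like a harmonic function with a mixed Steklov--Neumann or Steklov--Dirichlet boundary condition, and one shows the leading term cannot vanish to infinite order, so its nodal set near $p$ does not contain $p$ unless $\phi \equiv 0$. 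Combined with the first part (which already forces $C$ to reach $B_0$), this pins down the geometry of $C$ in two dimensions.
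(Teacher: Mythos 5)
Your argument for the first assertion follows the same Pleijel-type strategy as the paper (cut off a subdomain by $C$, recognize $\phi$ there as a first Steklov--Dirichlet eigenfunction, and contradict strict domain monotonicity, Proposition \ref{monot:dir}), but the key step is not justified as written. From $C\cap B_0=\emptyset$ you infer that for a nodal domain $\Omega$ adjacent to $C$ one has $B_\Omega=\partial\Omega\setminus F_\Omega\subset\mathcal N_\phi\cup\partial F'$, so that $\phi$ vanishes on $B_\Omega$. This is false for an arbitrary nodal domain with $C\subset\partial\Omega$: there are (at least) two such domains, one on each side of $C$, and the ``outer'' one will in general have boundary containing open pieces of $B$ on which only the Neumann condition holds and $\phi\neq 0$ (picture a small closed nodal arc $C$ hugging $F$: the nodal domain on its far side contains essentially all of $W$ and its boundary contains all of $B$). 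What you need --- and what the paper does --- is to take $D$ to be the component of $W\setminus C$ enclosed by $C$ and $\overline F$, which exists precisely because $C\cap\partial W\subset\overline F$; then $\partial D\subset C\cup\overline F$, $\phi$ does not change sign on $D$ (or one passes to a nodal domain inside $D$ adjacent to $C$), and $\mu=\lambda_1(D)>\lambda_1(W)$ by strict monotonicity, giving the contradiction. Your parenthetical ``or a piece of it cut off by $C$'' points at this, but the choice of the correct side of $C$ is the whole content of the step and must be made explicit. (You should also dispose of the case $C\cap\partial W=\emptyset$, where $C$ encloses a region on whose boundary the harmonic function $\phi$ vanishes, forcing $\phi\equiv0$.)

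For the second assertion ($d=2$, $C\cap\partial F'=\emptyset$) your proposal is only a plan, and the plan would not work. A local expansion at a corner where $F$ meets $B$ excludes at most infinite-order vanishing; the leading coefficient of the expansion can perfectly well vanish without $\phi\equiv0$, so ``the nodal set near $p$ does not contain $p$ unless $\phi\equiv0$'' is unjustified. Worse, your corner analysis makes no use of the hypothesis $\mu\le\lambda_1$, so if it worked it would show that no nodal line of any sloshing eigenfunction reaches $\partial F'$, which is not what is true in general (the paper only cites such a statement, from \cite{KKM}, for the first nontrivial eigenfunction under John's condition). The paper's argument is global and elementary: by the first part one end of the curve $C$ lies on $B_0$; if the other end were a point of $\partial F'$, then $C$ together with a piece of $B$ would bound a subdomain of $W$ whose boundary does not meet $F$, on which $\phi$ is harmonic, vanishes on $C$, and has vanishing normal derivative on the $B$-piece. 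Green's identity then gives $\int|\nabla\phi|^2=0$ on that subdomain, so $\phi$ is a constant equal to zero there, hence $\phi\equiv0$ on $W$ by unique continuation --- a contradiction. You should replace your corner-asymptotics sketch by this argument.
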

The proof of Lemma \ref{lemma:nodal} is analogous to the proof of the fact that
the second Neumann eigenfunction cannot have a closed nodal line
\cite[p.~546]{Pleijel}. We present the details in section
\ref{nodal:proof}.

Lemma \ref{lemma:nodal} together with Theorems \ref{main} and
\ref{main2} immediately imply the following result.
\begin{cor}
\label{cor:nodal} Let $W \subset \Rd$ be as above and let $\phi_n$
be an eigenfunction of the sloshing problem \eqref{Neumann} on $W$
corresponding to an eigenvalue $\mu_n$.

\noindent (i) If $W$ satisfies the weak John's condition,
 then
$N_{\phi_2} \cap B_0 \neq \emptyset$. If, moreover, $d=2$, then
$N_{\phi_2} \cap \partial F' = \emptyset$.

\smallskip

\noindent (ii) If $W$ satisfies the standard John's condition and
$F' \subset \Rdm$ is a convex set, then $C \cap B_0 \neq \emptyset$
for any connected component of the set $C \subset
\mathcal{N}_{\phi_k} $, $k=2,\dots, d$.
\end{cor}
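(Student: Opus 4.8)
The plan is to obtain both parts as immediate corollaries of Lemma \ref{lemma:nodal}, whose only hypothesis is an inequality of the form $\mu \le \lambda_1$ relating the Steklov--Neumann eigenvalue $\mu$ attached to the eigenfunction in question to the first Steklov--Dirichlet eigenvalue $\lambda_1$. The entire content of the proof is therefore to check that this inequality is supplied, in each case, by Theorems \ref{main} and \ref{main2}; no new analysis is needed.

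For part (i), I would apply Theorem \ref{main} with $n = 1$. Under the weak John's condition this gives $\mu_2 \le \lambda_1$ (strictly if $d \ge 3$, but the non-strict inequality is all that is required). Hence $\phi_2$ is an eigenfunction of \eqref{Neumann} corresponding to an eigenvalue $\mu_2 \le \lambda_1$, and Lemma \ref{lemma:nodal} applies: every connected component $C$ of $\mathcal{N}_{\phi_2}$ satisfies $C \cap B_0 \neq \emptyset$, so in particular $\mathcal{N}_{\phi_2} \cap B_0 \neq \emptyset$ once we know $\mathcal{N}_{\phi_2} \neq \emptyset$. The latter is automatic: since $\mu_2 > \mu_1 = 0$, the eigenfunction $\phi_2$ is orthogonal (in $L^2(F')$, via \eqref{varneum}) to the constant eigenfunction $\phi_1$, so $\int_{F'} \phi_2(x,0)\,dx = 0$, forcing $\phi_2$ to change sign on $F$ and hence to have nonempty nodal set. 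When $d = 2$, the last assertion of Lemma \ref{lemma:nodal} gives $C \cap \partial F' = \emptyset$ for each such component, and therefore $\mathcal{N}_{\phi_2} \cap \partial F' = \emptyset$.

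For part (ii), I would instead use the convex refinement \eqref{convex} of Theorem \ref{main2} with $n = 1$: under the standard John's condition with $F'$ convex this yields $\mu_d < \lambda_1$. Since the Steklov--Neumann eigenvalues are nondecreasing, $\mu_k \le \mu_d < \lambda_1$ for every $k = 2, \dots, d$. Thus each $\phi_k$ with $k = 2, \dots, d$ is an eigenfunction corresponding to an eigenvalue strictly below $\lambda_1$, and Lemma \ref{lemma:nodal} applies verbatim to give $C \cap B_0 \neq \emptyset$ for every connected component $C \subset \mathcal{N}_{\phi_k}$.

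There is no substantive obstacle: the difficulty is concentrated entirely in Lemma \ref{lemma:nodal} and Theorems \ref{main}, \ref{main2}. The only points requiring a word of care are the non-emptiness of the relevant nodal sets (handled above by orthogonality to constants) and the possibility that \eqref{convex} produces repeated eigenvalues among $\mu_2, \dots, \mu_d$ — but this causes no trouble, since Lemma \ref{lemma:nodal} is stated for an arbitrary eigenfunction $\phi$ with $\mu \le \lambda_1$ and hence applies to every element of each eigenspace.
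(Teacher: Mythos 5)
Your proposal is correct and is essentially identical to the paper's argument: the authors simply note that the Corollary follows immediately from Lemma \ref{lemma:nodal} combined with Theorem \ref{main} (for part (i), via $\mu_2\le\lambda_1$) and the convexity estimate \eqref{convex} of Theorem \ref{main2} (for part (ii), via $\mu_k\le\mu_d<\lambda_1$). Your added remarks on the non-emptiness of $\mathcal{N}_{\phi_2}$ and on repeated eigenvalues are correct refinements that the paper leaves implicit.
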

Geometric properties of nodal sets of sloshing eigenfunctions in two
dimensions have been previously studied in \cite{K, KKM}.
In fact, it  was claimed in \cite{K} that any nodal line of any
sloshing eigenfunction must intersect the bottom of the container
(i.e. the set $B$ in our notation). However, in \cite{KKM} a
counterexample to this statement was constructed. At the same time,
it was shown in \cite[Theorem 3.1 (ii)]{KKM} that this is indeed
true for the nodal set $\mathcal{N}_{\phi_2}$. The first statement
of Corollary \ref{cor:nodal}\,(i) can be viewed as a
higher--dimensional generalization of this result for domains
satisfying the weak John's condition. Note that in any dimension,
the set  $\mathcal{N}_{\phi_2}$ consists of a single connected
component.  This
follows from the analogue of Courant's nodal domain theorem for
sloshing problems (we note that while this theorem is stated in
\cite{K,KKM} for planar domains only, the argument extends to higher
dimensions in a straightforward way.)

It was also shown in \cite[Corollary 3.4]{KKM} that if a planar
domain $W$ satisfies the standard John's condition then the nodal
line for the first nontrivial eigenfunction does not contain the
endpoints of the free boundary $F$.  The second part of Corollary
\ref{cor:nodal}\,(i) extends this result to domains satisfying the
{\it weak} John's condition.

Let us also remark that the proof of Corollary \ref{cor:nodal} is
based on more elementary  ideas (such as domain monotonicity of
mixed Steklov eigenvalues) than the argument in \cite{KKM}.
\subsection{Plan of the paper} In sections 2.1--2.4 we discuss some examples illustrating Theorems \ref{main} and \ref{main2}
and shedding more light on the geometric assumptions on the sets
$W$, $F$ and $B$.  In sections 3.1 and 3.2 we review the results
regarding domain monotonicity of eigenvalues for the mixed Steklov
problems. While this property is well--known, it seems that it has
not been stated in the literature in full strength. In particular,
we show that domain monotonicity is {\it strict} which requires some
extra work. Finally, in sections 4.1--4.3 the proofs of the main
results are presented.
\section{Examples}
\subsection{Cylindrical domains} \label{cylindrical} Let $F' \subset
\Rdm$ be a bounded Lipschitz domain.  Set  $F = F' \times \{0\}$, $W
= F' \times (-l,0)$, $l > 0$, and $B=\partial{W}\setminus F$.
Clearly, the cylindrical domain $W$  satisfies the standard John's
condition. By separation of variables it is easy to see that the
eigenfunctions and eigenvalues of the problems \eqref{Neumann} and
\eqref{Dirichlet} are given by
\begin{equation}
\label{cyl:neum} v_n(x,y) = \tilde{v}_n(x)
\cosh(\sqrt{\tilde{\mu}_n}(y + l)), \quad \mu_n =
\sqrt{\tilde{\mu}_n} \tanh(\sqrt{\tilde{\mu}_n} l)
\end{equation}
and, respectively,
\begin{equation}
\label{cyl:dir} u_n(x,y) = \tilde{u}_n(x)
\sinh(\sqrt{\tilde{\lambda}_n}(y + l)), \quad \lambda_n =
\sqrt{\tilde{\lambda}_n} \coth(\sqrt{\tilde{\lambda}_n} l),
\end{equation} where $\{\tilde{v}_n\}$ and $\{\tilde{\mu}_n\}$ are
the eigenfunctions and eigenvalues of the Neumann problem for the
Laplacian  on $F'$ and $\{\tilde{u}_n\}$ and $\{\tilde{\lambda}_n\}$
are the eigenfunctions and eigenvalues of the Dirichlet problem for
the Laplacian on $F'$.  

By the classical inequalities between the Neumann and Dirichlet
eigenvalues of the Laplacian, conjectured in \cite{Pay} and proved
in \cite{Fri, F},  we have $\tilde{\mu}_{n + 1} < \tilde{\lambda}_n$
for all $n=1,2 \dots$ in dimensions $d \ge 3$ (for $n=1$ and $d=3$
this inequality can be also deduced from \cite{Po} and \cite[section
1.5]{S}). Moreover, if $F'$ is convex then $\tilde{\mu}_{n +d-1} \le
\tilde{\lambda}_n$ by \cite{LW}. If $d=2$ then $F'$ is just an
interval of the real line, and an elementary calculation yields
$\tilde{\mu}_{n + 1}=\tilde{\lambda}_n$, $n=1,2,\dots$. Since
$\tanh(\alpha) < 1 < \coth(\beta)$ for all $\alpha, \beta
> 0$ we immediately obtain from \eqref{cyl:neum} and \eqref{cyl:dir} the assertions of Theorem \ref{main2} for
cylindrical domains in any dimension.

As follows from section \ref{discussion}, for $d = 3$ the eigenvalue
problem \eqref{Neumann} in this example describes free fluid
oscillations in a glass-like container $W = F' \times (-l,0)$ with
the free fluid surface $F=F'\times \{0\}$.

\subsection{Sloshing in a vase} \label{vase}
In this section we show that there exist domains satisfying the weak
John's condition but not the standard John's condition.

For any $r
> 0$, $h_2 < h_1 \le 0$, let $L(r,h_2,h_1) \subset \Rd$ be a cylinder
given by
\begin{equation*}
L(r,h_2,h_1) = \{(x,y) \in \Rdm \times (-\infty,0]: \,
|x| < r, \, h_2 \le y \le h_1 \}.
\end{equation*}
Let $W \subset \R^3$ satisfy
assumptions (I) and (II) and
\begin{eqnarray}
&& F' = \{(x_1,x_2) \in \R^2: \, x_1^2 + x_2^2 < 1\},
\label{R2a}\\
&& W \subset L(1,-1,0) \cup L(0.5,-2,-1) \cup L(1.5,-4,-2)
\label{R2b}.
\end{eqnarray}
\begin{figure}[t]
  \begin{center}
    \subfloat[Vase]{
\beginpgfgraphicnamed{mixed_steklov-9_pic2}
\begin{tikzpicture}
  \path (-2.5,1.5) rectangle (3,-5);
  \draw[thick,smooth] plot coordinates {(-1,0)(-0.3,-1)(-0.5,-2)(-1.5,-3)(-0.7,-4)};
  \draw[thick,smooth] plot coordinates {(1,0)(0.3,-1)(0.5,-2)(1.5,-3)(0.7,-4)};
  \draw[thick] (0,0.03) ellipse (1 and 0.2);
  \draw[thick] (-0.7,-4) arc (-175:-5:0.7 and 0.15);
  \draw[dotted] (0.7,-4) arc (-5:190:0.7 and 0.15);
  \draw[thick] (-1.5,-3) arc (-175:-5:1.5 and 0.3);
  \draw[dotted] (1.5,-3) arc (-5:190:1.5 and 0.3);
  \draw[thick] (-0.5,-2) arc (-175:-5:0.5 and 0.1);
  \draw[dotted] (0.5,-2) arc (-5:190:0.5 and 0.1);
  \draw[thick] (-0.3,-1) arc (-175:-5:0.3 and 0.066);
  \draw[dotted] (0.3,-1) arc (-5:190:0.3 and 0.066);
\end{tikzpicture}
\endpgfgraphicnamed

  \label{vase3d}
}
     \subfloat[Its projection]{
\beginpgfgraphicnamed{mixed_steklov-9_pic3}
\begin{tikzpicture}
  \path (-2.5,1.5) rectangle (3,-5);
  \draw[->] (-2,0) -- (2.5,0);
  \draw[->] (0,-4.5) -- (0,1);
  \draw (1,0) {node [above,scale=0.9] {\tiny $1$}} [fill] circle (1pt);
  \draw (0.5,0) {node [above,scale=0.9] {\tiny $0.5$}} [fill] circle (1pt);
  \draw (1.5,0) {node [above,scale=0.9] {\tiny $1.5$}} [fill] circle (1pt);
  \draw (2.5,0) node[above] {\tiny $x_1$};
  \draw (0,1) node[right] {\tiny $y$};
  \draw[dashed] (-1,0) rectangle (1,-1);
  \draw[dashed] (-0.5,-1) -- (-0.5,-4);
  \draw[dashed] (0.5,-1) -- (0.5,-4);
  \draw[dashed] (-1.5,-2) rectangle (1.5,-4);
  \draw[dashed] (-1.5,-3) -- (1.5,-3);
  \draw[thick,smooth] plot coordinates {(-1,0)(-0.3,-1)(-0.5,-2)(-1.5,-3)(-0.7,-4)};
  \draw[thick,smooth] plot coordinates {(1,0)(0.3,-1)(0.5,-2)(1.5,-3)(0.7,-4)};
  \draw[thick] (0.7,-4) -- (-0.7,-4);
\end{tikzpicture}
\endpgfgraphicnamed

\label{projection}
}
  \end{center}
  \caption{\it Vase-like container satisfying the weak but not the standard John's condition.}
  \label{vaselike}
\end{figure}
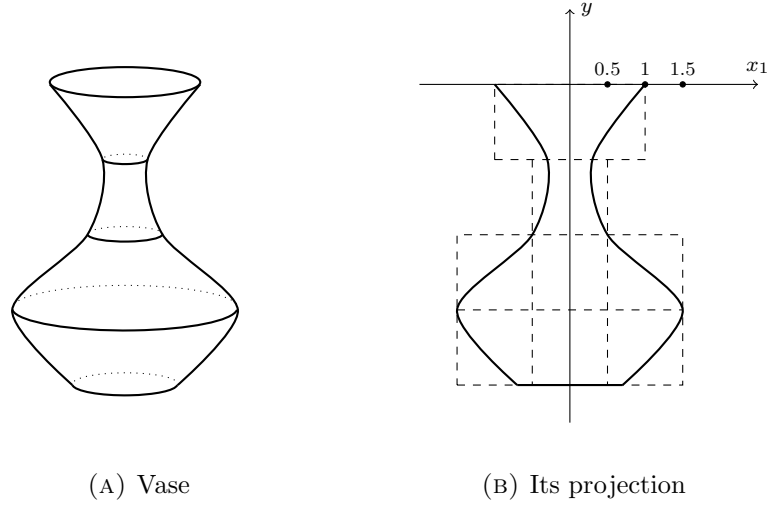
Let us show that $W$ satisfies the weak John's condition. It
suffices to verify that for any $a > 0$,
$$
\int_{L(1,-1,0) \cup L(0.5,-2,-1) \cup L(1.5,-4,-2)} e^{ay} \, dx \,
dy \le \frac{|F'|}{a} = \int_{F'} \int_{-\infty}^{0} e^{ay} \, dy \,
dx.
$$
This is equivalent to the following inequalities:
$$
\pi \left( \int_{-1}^{0} e^{a y} \, dy +  (0.5)^2 \int_{-2}^{-1}
e^{a y} \, dy + (1.5)^2 \int_{-4}^{-2} e^{a y} \, dy \right) \le \pi
\int_{-\infty}^{0} e^{a y} \, dy,
$$
$$
\frac{1 - e^{-a}}{a} + \frac{1}{4} \frac{e^{-a} - e^{-2a}}{a} +
\frac{9}{4} \frac{e^{-2a} - e^{-4a}}{a} \le \frac{1}{a},
$$
$$
8 e^{-2a} \le 3 e^{-a} + 9 e^{-4a},
$$
\begin{equation}
\label{ineq} 0 \le 3 e^{3a} - 8 e^{2a} + 9.
\end{equation}
Consider the function $f(x) = 3 x^3 - 8 x^2 + 9$. In order to prove (\ref{ineq}) we
need to show that $f(x) > 0$ for any $x \ge 0$. Indeed, by
elementary calculus, $\min\{f(x): \, x \ge 0\} = f(16/9)
> 0$.

In Figure \ref{vase3d} we give an example of a domain $W$ satisfying
\eqref{R2a} and \eqref{R2b} that has the shape of a vase. Its
projection on the $(x_1,y)$-plane is presented
in Figure \ref{projection}. The
domain $W$  is
symmetric with respect to the $y$-axis and
clearly does not satisfy the standard John's condition.

One can  modify the previous example and construct domains in any
dimension which satisfy the weak
but not the
standard John's condition. Indeed, let $W \subset \Rd$ be an arbitrary domain,
satisfying the assumptions (I) and (II) of section \ref{mixed}, such
that
\begin{eqnarray}
&& F' = \{x \in \Rdm: \, |x| < r_1\},
\label{R1a}\\
&& W \subset L(r_1,h_1,0) \cup L(r_2,h_2,h_1) \cup L(r_3,h_3,h_2),
\label{R1b}\\
&& 0 < r_2 < r_1 < r_3, \, r_2^{d - 1} + r_3^{d - 1} \le 2 r_1^{d -
1},
\label{R1c}\\
&& 0 \ge h_1 > h_2 > h_3, \, h_1 - h_2 \ge h_2 - h_3. \label{R1d}
\end{eqnarray}
Since $r_1 < r_3$ we have $L(r_3,h_3,h_2) \not \subset F' \times
(-\infty,0)$ so $W$ does not have to satisfy the condition $W
\subset F' \times (-\infty,0)$. One can show by an explicit
calculation that under these assumptions $W$ satisfies the weak
John's condition. We leave the details to the interested reader.
 \begin{remark}
Note that the conditions \eqref{R2a} and  \eqref{R2b} are not a
special case of (\ref{R1a} - \ref{R1d}) for $d = 3$ and some choice
of $r_1,r_2,r_3$, $h_1,h_2,h_3$. We use a particular choice of the
parameters in (\ref{R2a}) and \eqref{R2b} in order to construct a
domain $W$ that has a vase-like shape which is natural in the
context of the sloshing problem.
\end{remark}



\subsection{Sloshing in a spherical container}
\label{spherical:sec} In this section we present an example showing
that one can not remove the weak John's condition from the
formulation of  Theorem \ref{main}. We use spherical containers
studied by McIver in \cite{McI}. Figure \ref{sphere} shows such a
container and its  projection with the free surface contained in the
$x_1$-axis.

\begin{figure}[t]
  \begin{center}
    \subfloat[Spherical container]{
\beginpgfgraphicnamed{mixed_steklov-9_pic4}
\begin{tikzpicture}
  \newcommand\pgfmathsinandcos[3]{%
  \pgfmathsetmacro#1{sin(#3)}%
  \pgfmathsetmacro#2{cos(#3)}%
}
\newcommand\LongitudePlane[3][current plane]{%
  \pgfmathsinandcos\sinEl\cosEl{#2} 
  \pgfmathsinandcos\sint\cost{#3} 
  \tikzset{#1/.estyle={cm={\cost,\sint*\sinEl,0,\cosEl,(0,0)}}}
}
\newcommand\LatitudePlane[3][current plane]{%
  \pgfmathsinandcos\sinEl\cosEl{#2} 
  \pgfmathsinandcos\sint\cost{#3} 
  \pgfmathsetmacro\yshift{\cosEl*\sint}
  \tikzset{#1/.estyle={cm={\cost,0,0,\cost*\sinEl,(0,\yshift)}}} %
}
\newcommand\DrawLongitudeCircle[2][1]{
  \LongitudePlane{\angEl}{#2}
  \tikzset{current plane/.prefix style={scale=#1}}
  \pgfmathsetmacro\angVis{atan(sin(#2)*cos(\angEl)/sin(\angEl))} %
  \pgfmathsetmacro\angMin{atan(sin(25.5)*cos(\angEl)/sin(\angEl))}
  \pgfmathsetmacro\angMinB{atan(sin(20)*cos(\angEl)/sin(\angEl))}
  \draw[current plane] (\angVis:1) arc (\angVis:\angMin:1);
  \draw[current plane] (180-\angMin:1) arc (180-\angMin:{max(\angVis+180,180-\angMin)}:1);
  \draw[current plane,dashed] (\angMinB-270:1) arc (\angMinB-270:\angVis:1);
}
\newcommand\DrawLatitudeCircle[2][1]{
  \LatitudePlane{\angEl}{#2}
  \tikzset{current plane/.prefix style={scale=#1}}
  \pgfmathsetmacro\sinVis{sin(#2)/cos(#2)*sin(\angEl)/cos(\angEl)}
  \pgfmathsetmacro\angVis{asin(min(1,max(\sinVis,-1)))}
  \draw[current plane] (\angVis:1) arc (\angVis:-\angVis-180:1);
  \draw[current plane,dashed] (180-\angVis:1) arc (180-\angVis:\angVis:1);
}
\newcommand\FillLatitudeCircle[2][1]{
  \LatitudePlane{\angEl}{#2}
  \tikzset{current plane/.prefix style={scale=#1}}
  \pgfmathsetmacro\sinVis{sin(#2)/cos(#2)*sin(\angEl)/cos(\angEl)}
  \pgfmathsetmacro\angVis{asin(min(1,max(\sinVis,-1)))}
  \draw[current plane] (\angVis:1) arc (\angVis:-\angVis-180:1);
  \draw[current plane] (180-\angVis:1) arc (180-\angVis:\angVis:1);
}
\def\R{2.5} 
\def\angEl{20} 
\foreach \t in {-75,-50,...,25} { \DrawLatitudeCircle[\R]{\t} }
\foreach \t in {-5,-35,...,-175} { \DrawLongitudeCircle[\R]{\t} }
\FillLatitudeCircle[\R]{50}
\end{tikzpicture}
\endpgfgraphicnamed

\label{spherical}
}
\subfloat[Its projection]{
\beginpgfgraphicnamed{mixed_steklov-9_pic5}
\begin{tikzpicture}[scale=1.5]
      \draw[->] (-2,0) -- (2,0) node[above] {\tiny $x_1$};
      \draw (0,0) node [above] {\tiny $0$};
      \fill (0,0) circle (0.5pt);
      \draw (-1,0) node [above] {\tiny $-1$};
      \fill (-1,0) circle (0.5pt);
      \clip (-2.2,0) rectangle (2.2,-3);
      \draw (0,-4/3) circle (5/3);
      \draw[dashed] (0,0) -- (0,-4/3) node[pos=0.5,right] {\small $d-c$};
      \draw[dashed] (0,-4/3) -- (-1,0) node[pos=0.5,below] {\small $c$};
      \draw[dotted, thick] (-1,0) -- (-1,-8/3) node[left,pos=0.5] {\small $l$} -- (1,-8/3) -- (1,0);
    \end{tikzpicture}
\endpgfgraphicnamed

\label{projsphere}
}
  \end{center}
  \caption{Spherical container with radius $c$ filled up to level $d$, with circular aperture of radius $1$. Dotted lines show cylindrical container of height $l$.}
  \label{sphere}
\end{figure}

McIver gives numerical results for various ratios of parameters
$d/c$ defined on Figure \ref{sphere}, see \cite[Table 2]{McI} . In
particular the second Steklov--Neumann eigenvalue for $d/c=1.8$
equals $2.376$ (after rescaling). This is not enough to get a
contradiction by just comparing it with the first Steklov--Dirichlet
eigenvalue of the cylinder contained in the spherical container.
Indeed, the semi--infinite cylinder has the first Steklov--Dirichlet
eigenvalue equal to $2.4048$, and the corresponding eigenvalue of a
truncated cylinder is even a little bit larger (see equation
\eqref{cyl:dir}).

We implemented McIver's numerical method to get the second
Steklov--Neumann eigenvalue for $d/c=1.9$. Our algorithm differs
slightly from the original one. We used the standard numerical
integration function in {\it Mathematica} to find values of
integrals $I_m$ instead of approximating integrands with Chebyshev
polynomials (see \cite[Appendix B]{McI}). Numerical results obtained
using our method are virtually identical to those found by McIver.

For $d/c=1.9$,  we found that the second Steklov--Neumann eigenvalue
is equal to $2.51105$. The biggest cylinder contained in such a
spherical container has height $l=4.13$. Using formula
\eqref{cyl:dir},  we get that the first Steklov--Dirichlet
eigenvalue of this cylinder is equal to $2.4048$ (practically the
same as for the semi--infinite cylinder). By domain monotonicity
(see section \ref{monotdir:sec}) it is larger than the value of the
first Steklov--Dirichlet eigenvalue of the spherical container, yet
it is smaller than its second Steklov--Neumann eigenvalue. This
gives a ``counterexample'' to our main result if the domain does not
satisfy the weak John's condition. The latter could be verified
directly: a numerical calculation shows that for the spherical
container presented on Figure \ref{sphere}  the inequality
\eqref{condition} fails if $a>0.08$ (in fact, one can check that if
$d/c>1$, the inequality \eqref{condition} does not hold for
sufficiently large~$a$).

Note that our estimate of the Steklov--Dirichlet eigenvalue is quite
crude; it would be interesting to establish the precise value of a
``critical ratio'' $\alpha \in (1,1.9)$, such that Theorem
\ref{main} holds for spherical containers with $d/c<\alpha$ and
fails for $d/c \ge \alpha$.


\subsection{The ``ice fishing problem"}
\label{icefishing} In this section we present another
``counterexample'' to Theorem \ref{main}, this time for an unbounded
domain (cf. Remark \ref{rem:inf}). For $d =3$, let $F' = \{x \in
\Rt: |x| < 1\}$ be the unit disk and set $F= F' \times \{0\}$, $W =
\{(x,y): \, x \in \Rt, y < 0\}$ and $B = \partial{W} \setminus F$.
Even though the domain $W$ is unbounded, it is well known (see
\cite{M} or \cite{KoK}) that the eigenvalue problem (\ref{Neumann})
considered in the function space
$$\mathcal{K} = \left\{
\int_{F'} u^2(x,0) \, dx < \infty, \int_W |\nabla u(x,y)|^2 \, dx \,
dy < \infty \right\}$$ has discrete spectrum satisfying $$0 = \mu_1
< \mu_2 \le \mu_3 \le \ldots \to \infty.$$
 A similar statement holds for problem (\ref{Dirichlet}).


Clearly, $W$ does not satisfy the weak John's condition, and
numerical calculations show that the assertion of Theorem \ref{main}
does not hold in this case. In fact, by \cite[Table 2]{M}, $\mu_2
\approx 2.7547$ and by \cite[eq. (2.15)]{BK}, $\lambda_1 \le 2\pi/3
\approx 2.094$. Thus, $\mu_2 > \lambda_1$.

We remark that the eigenvalue problem (\ref{Neumann}) in this
example describes the so called ``ice fishing problem" (see
\cite{KoK}). That is, it describes free-fluid oscillations in the
lower half-space $W = \{(x,y): \, x \in \Rt, y < 0\}$ covered
above by ice with an ice hole $F$.

We conclude by noting that in two dimensions, unbounded domains
providing ``counterexamples'' to Theorem \ref{main} can be
constructed using infinite cylindrical domains, see Remark
\ref{inf:cyl}.



\section{Domain monotonicity of mixed Steklov eigenvalues}
\label{monot} In this section we sum up some facts regarding domain
monotonicity of eigenvalues of mixed Steklov problems.  These
results are well--known and in various forms can  be found in the
literature (see, for example, \cite[section 2.2]{KK} and references
therein), however, since they are essential for the proofs of
Theorem \ref{main2} and Lemma \ref{lemma:nodal} we present them here
in detail. A particular emphasis is made on {\it strict} domain
monotonicity of mixed Steklov eigenvalues.

\subsection{Steklov--Dirichlet problem} \label{monotdir:sec}The eigenvalues of the mixed Steklov--Dirichlet problem
satisfy strict domain monotonicity in the following sense:
\begin{prop}
\label{monot:dir} Let $(W,F,B)$ and $(W^*,F^*,B^*)$ be two triples
of sets satisfying the assumptions (I) and (II) of section
\ref{mixed}, such that $W \subset W^*$ and $F \subset F^*$. Let
$\lambda_n$ and $\lambda_n^*$, $n=1,2,\dots$, be the eigenvalues of
the problem \eqref{Dirichlet} on $W$ and $W^*$, respectively. Then
$\lambda_n \ge \lambda_n^*$ for all $n\ge 1$. If, moreover, either
$W$ is a proper subset of $W^*$ or $F$ is a proper subset of $F^*$,
then $\lambda_n
> \lambda_n^*$ for all $n\ge 1$.
\end{prop}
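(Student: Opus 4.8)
The plan is to read everything off the variational characterization \eqref{vardir}. For the non-strict inequality, note that if $u\in H_0^1(W,B)$ and $\widetilde u$ denotes its extension by zero to $W^*$, then — since assumption (I) forces $W^*\subset\{y<0\}$, so $F\cap W^*=\emptyset$ and hence the part of $\partial W$ interior to $W^*$ lies in $B$, where $u$ vanishes — one has $\widetilde u\in H^1(W^*)$, and checking the boundary traces (using $F\subseteq F^*$) gives $\widetilde u\in H_0^1(W^*,B^*)$. Since $\int_{W^*}|\nabla\widetilde u|^2=\int_W|\nabla u|^2$ and $\int_{F'^*}\widetilde u(x,0)^2\,dx=\int_{F'}u(x,0)^2\,dx$, extending by zero an $n$-dimensional subspace realizing the infimum in \eqref{vardir} for $W$ produces an admissible subspace for $W^*$ with the same supremum of Rayleigh quotients; hence $\lambda_n^*\le\lambda_n$.

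For the strict inequality I would argue by contradiction: assume $\lambda_n=\lambda_n^*$ for some $n$. Pick eigenfunctions $u_1,\dots,u_n$ of \eqref{Dirichlet} on $W$ with $L^2(F')$-orthonormal traces; the weak formulation then gives $\int_W\nabla u_i\cdot\nabla u_j=\lambda_i\delta_{ij}$. Their zero-extensions span an $n$-dimensional space $V\subset H_0^1(W^*,B^*)$ on which $\int_{F'^*}\widetilde u_i\widetilde u_j=\delta_{ij}$ and $\int_{W^*}\nabla\widetilde u_i\cdot\nabla\widetilde u_j=\lambda_i\delta_{ij}$, so the supremum of the Rayleigh quotient over $V$ equals $\max_{i\le n}\lambda_i=\lambda_n=\lambda_n^*$. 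Let $\psi_1,\dots,\psi_{n-1}$ be eigenfunctions of \eqref{Dirichlet} on $W^*$ for $\lambda_1^*,\dots,\lambda_{n-1}^*$. Since $\dim V=n$, there is $v_0\in V\setminus\{0\}$ whose trace on $F'^*$ is $L^2(F'^*)$-orthogonal to the traces of $\psi_1,\dots,\psi_{n-1}$; note $\int_{F'^*}v_0(x,0)^2\,dx>0$. By the variational characterization of $\lambda_n^*$ the Rayleigh quotient of $v_0$ is $\ge\lambda_n^*$, while it is $\le\lambda_n^*$ because $v_0\in V$, so it equals $\lambda_n^*$. Equality forces (using minimality of the Dirichlet energy for the harmonic extension and the spectral expansion on $F'^*$) both that $v_0$ is harmonic in $W^*$ and that its trace lies in the $\lambda_n^*$-eigenspace; hence $v_0$ is an eigenfunction of \eqref{Dirichlet} on $W^*$ with eigenvalue $\lambda_n^*$.

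Write $v_0=\widetilde{u_0}$ with $0\neq u_0\in\spana(u_1,\dots,u_n)\subset H_0^1(W,B)$; thus $u_0$ is harmonic in $W$ and vanishes on $B$. If $W$ is a proper subset of $W^*$, then since $W$ is Lipschitz it equals the interior of its closure, so $W^*\setminus\overline W$ is a nonempty open subset of the connected domain $W^*$; but $v_0$ vanishes there and, by the previous paragraph, is harmonic in $W^*$, so unique continuation gives $v_0\equiv0$, contradicting $v_0\neq0$. If instead $F$ is a proper subset of $F^*$, we may assume $W=W^*$ (otherwise the previous case applies), so $v_0=u_0$; set $G'=F'^*\setminus\overline{F'}$, which is nonempty and open since $F'$ and $F'^*$ are Lipschitz domains. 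Then $G'\times\{0\}\subset F^*$ and $G'\times\{0\}\cap F=\emptyset$, so $G'\times\{0\}\subset B$, and on $G'\times\{0\}$ the eigenfunction $v_0$ satisfies both $v_0=0$ (Dirichlet condition on $B$) and $\partial v_0/\partial y=\lambda_n^*v_0=0$ (Steklov condition on $F^*$, meaningful classically since $\partial W$ is flat along $G'\times\{0\}$ and $v_0$ is smooth up to it). Thus $v_0$ has vanishing Cauchy data on an open piece of a flat boundary portion, and odd reflection followed by unique continuation gives $v_0\equiv0$ on $W$, again a contradiction. Hence $\lambda_n>\lambda_n^*$. The zero-extension bookkeeping is routine; the real work — and the step I expect to be the main obstacle to present cleanly — is the equality analysis that produces a genuine $W^*$-eigenfunction which, through the zero-extension, is rigid enough (vanishing on an open set, or with vanishing Cauchy data on a flat boundary piece) for unique continuation to apply, and one must keep the Lipschitz hypotheses on $W$ and $F'$ in force precisely to guarantee that the relevant sets are nonempty open and that the boundary is flat there.
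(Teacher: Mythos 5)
Your proof is correct, but it follows a genuinely different route from the paper's. For the strict inequality the paper uses Welsh's chain trick: assuming $\lambda_n=\lambda_n^*$, it picks $k$ with $\lambda_k^*>\lambda_n^*$, interpolates $k$ properly nested Lipschitz triples between $(W,F,B)$ and $(W^*,F^*,B^*)$, observes that all the intermediate $n$-th eigenvalues coincide, and shows the $k$ zero-extended intermediate eigenfunctions are linearly independent (via unique continuation, and via the Caffarelli--Friedman result on vanishing Cauchy data in the $F\subsetneq F^*$ case); plugging this $k$-dimensional space into \eqref{vardir} gives $\lambda_k^*\le\lambda_n$, a contradiction. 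You instead run a rigidity analysis on the two given domains alone: you extract a single $v_0$ in the zero-extended span whose trace is orthogonal to the first $n-1$ eigenfunction traces on $F'^*$, squeeze its Rayleigh quotient between $\lambda_n^*$ and $\lambda_n$, and use the Bessel/energy-minimality equality case to conclude $v_0$ is an honest $W^*$-eigenfunction, which unique continuation (vanishing on the open set $W^*\setminus\overline W$, or vanishing Cauchy data on the flat piece $G'\times\{0\}$) then kills. Your approach avoids the paper's unconstructed chain of intermediate Lipschitz triples but requires the completeness of the eigenfunction traces in $L^2(F'^*)$ and the uniqueness of the energy-minimizing extension, which the paper's more combinatorial argument sidesteps; your use of reflection across the flat boundary piece also replaces the paper's appeal to Caffarelli--Friedman, which is legitimate here precisely because $F^*\subset\{y=0\}$. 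Both arguments are sound; yours is arguably more self-contained, the paper's more robust to generalization.
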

\begin{proof} First, note that the non-strict domain monotonicity follows
immediately from the variational characterization \eqref{vardir}.
Indeed, continuing any test-function for the Steklov--Dirichlet
problem on $W$ by zero one gets a test function on $W^*$ with the
same Rayleigh quotient. Therefore, $\lambda_n \ge \lambda_n^*$. 

In order to prove strict monotonicity,  we follow the argument
presented in an abstract form in \cite[Theorem 2.3]{W}.
First, assume by contradiction that $W$ is a proper subset of $W^*$
and $\lambda_n=\lambda_n^*$ for some $n\ge 1$. Let $k$ be such that
\begin{equation}
\label{forcontr}
\lambda_k^* > \lambda_n=\lambda_n^*
\end{equation}
Consider $k$ triples $(W_i,F_i,B_i)$, $i=1,\dots,k$, such that
$W=W_1 \subset W_2 \subset \dots \subset W_k=W^*$ and $F=F_1 \subset
F_2 \subset \dots \subset F_k=F^*$. Assume that $W_i$ and $F_i$,
$i=1,\dots,k$,  are Lipschitz and all the inclusions $W_i \subset
W_{i+1}$, $i=1,2,\dots,k-1$, are proper. By non-strict domain
monotonicity we have $\lambda_n=\lambda_n^{(1)}\ge\lambda_n^{(2)}\ge
\dots \ge \lambda_n^{(k)}=\lambda_n^*$, where $\lambda_n^{(i)}$,
$i=1,\dots,k$, is the $n$-th eigenvalue of the corresponding
Steklov--Dirichlet problem on $W_i$. Therefore, all the inequalities
in the previous formula are equalities.  Let $u_n^{(i)}$ be an
eigenfunction corresponding to the eigenvalue $\lambda_n^{(i)}$;
extending it by zero to $W^*\setminus W_i$ we may consider it as a
function on $W^*$. Clearly, the functions $u_n^{(i)}$ $i=1,\dots,k$
are admissible for the variational characterization \eqref{vardir}
for $\lambda_k^*$. Let us show that they are all linearly
independent. Suppose $\sum_{i=1}^k c_i u_n^{(i)} =0$ on $W^*$ and
$c_k\neq 0$. Then $u_n^{(k)}$ is identically zero on $W^*\setminus
W_{k-1}$, and hence by the unique continuation property of harmonic
functions $u_n^{(k)} \equiv 0$ on $W^*$, which is impossible.
Therefore, $c_k=0$. Arguing the same way, we show that all the other
coefficients $c_i=0$. Taking the subspace generated by $u_n^{(1)},
\dots, u_n^{(k)}$ in the variational characterization of
$\lambda_k^*$, we obtain $\lambda_k^* \le \lambda_n$ which
contradicts \eqref{forcontr}. This completes the proof of strict
domain monotonicity in the case of the proper inclusion $W \subset
W^*$.

If $W=W^*$ and $F$ is a proper subset of $F^*$ the proof is
analogous. In the construction of auxiliary triples $(W,F_i,B_i)$ we
must assume that all the inclusions $F=F_1 \subset F_2\subset \dots
\subset F_k=F^*$ are proper. In order to prove linear independence
of test-functions $u_n^{(1)},\dots, u_n^{(k)}$ we must show that if
for some  $i=1,\dots, k$ the function $u_n^{(i)}$ vanishes on
$F_i\setminus F_{i-1}$, then  it should vanish identically. Indeed,
in this case the derivatives of $u_n^{(i)}$ are zero on
$F_i\setminus F_{i-1}$ in all directions tangential to $F_i$.
Moreover, since $u_n^{(i)}$ is an eigenfunction of the
Steklov--Dirichlet problem on $(W,F_i,B_i)$, its normal derivative
also vanishes on $F_i\setminus F_{i-1}$. Therefore, $\nabla
u_n^{(i)}$ vanishes on $F_i\setminus F_{i-1}$. Hence, a harmonic
function $u_n^{i}$ vanishes together with its gradient on a set of
codimension one, which by \cite[section 3]{CF} implies that it is
identically zero. This completes the proof of the Proposition
\ref{monot:dir}.
\end{proof}
\subsection{Steklov--Neumann problem} \label{monotneum:sec} For eigenvalues of the
Steklov--Neumann problem, domain monotonicity holds in a more
restrictive sense than in the Steklov--Dirichlet case: namely, the
``free boundary'' parts of $\partial W$ and $\partial W^*$ (i.e. the
sets $F$ and $F^*$) must coincide.
\begin{prop}
\label{monot:neum} Let $(W,F,B)$ and $(W^*,F,B^*)$ be two triples of
sets satisfying the assumptions (I) and (II) of section \ref{mixed},
such that $W$ is a proper subset of $W^*$. Let $\mu_n$ and
$\mu_n^*$, $n=1,2,\dots$, be the eigenvalues of the problem
\eqref{Neumann} on $W$ and $W^*$, respectively. Then $\mu_n <
\mu_n^*$ for all $n\ge 2$.
\end{prop}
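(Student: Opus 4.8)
The plan is to obtain the strict inequality directly, by restricting the Steklov--Neumann eigenfunctions of the larger domain $W^*$ to $W$. This is possible precisely because, in contrast with the Steklov--Dirichlet situation of Proposition~\ref{monot:dir}, test functions for \eqref{Neumann} are only required to lie in $H^1$, with no boundary condition imposed on the bottom; so no extension by zero is needed, and one may simply restrict.

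First I would fix eigenfunctions $v_1^*,\dots,v_n^*\in H^1(W^*)$ of \eqref{Neumann} on $W^*$ corresponding to $\mu_1^*\le\dots\le\mu_n^*$, normalized so that their traces on $F'$ are orthonormal in $L^2(F')$; recall that these traces are the eigenfunctions of a self-adjoint Dirichlet-to-Neumann type operator on $F'$ with discrete spectrum, hence form an orthonormal basis of $L^2(F')$ and, in particular, are linearly independent. Set $E_n=\spana(v_1^*,\dots,v_n^*)$. Every element of $E_n$ is harmonic in $W^*$, so by the unique continuation property no nonzero $v^*\in E_n$ can vanish on the open set $W$; thus the restriction map $v^*\mapsto v^*|_W$ is injective on $E_n$, and $V_n:=\{v^*|_W:\,v^*\in E_n\}$ is an $n$-dimensional subspace of $H^1(W)$. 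Since $F=F'\times\{0\}$ is a boundary portion of both $W$ and $W^*$, the $F'$-trace of $v^*|_W$ equals the $F'$-trace of $v^*$; consequently the Rayleigh quotient of $v^*|_W$ on $W$ equals
\[
Q(v^*):=\frac{\int_W|\nabla v^*|^2\,dx\,dy}{\int_{F'}(v^*)^2(x,0)\,dx},
\]
and by linear independence of the traces $\int_{F'}(v^*)^2(x,0)\,dx>0$ for every nonzero $v^*\in E_n$, so $Q$ is well defined and continuous on $E_n\setminus\{0\}$.

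The heart of the proof is the claim that $Q(v^*)<\mu_n^*$ for every nonzero $v^*\in E_n$, and this is exactly where the hypothesis $n\ge 2$ enters. By the min--max principle \eqref{varneum} the span $E_n$ of the first $n$ eigenfunctions realizes the infimum, so $\int_{W^*}|\nabla v^*|^2\le\mu_n^*\int_{F'}(v^*)^2(x,0)\,dx$ for all $v^*\in E_n$. If $v^*$ is a nonzero constant, then $Q(v^*)=0<\mu_n^*$ since $\mu_n^*\ge\mu_2^*>0$; for $n=1$ this fails, consistently with $\mu_1=\mu_1^*=0$. If $v^*$ is non-constant, then, being harmonic and real-analytic on the connected set $W^*$, it has $|\nabla v^*|^2>0$ almost everywhere; moreover, since $W$ and $W^*$ are Lipschitz domains (hence coincide with the interiors of their closures), the proper inclusion $W\subsetneq W^*$ forces $W^*\setminus\overline W$ to be a nonempty open set, and therefore $\int_{W^*\setminus\overline W}|\nabla v^*|^2>0$. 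Hence $\int_W|\nabla v^*|^2<\int_{W^*}|\nabla v^*|^2$, whence $Q(v^*)<\mu_n^*$. Since $\{v^*\in E_n:\,\int_{F'}(v^*)^2(x,0)\,dx=1\}$ is compact and $v^*\mapsto\int_W|\nabla v^*|^2$ is continuous, $\sup_{0\neq v^*\in E_n}Q(v^*)$ is attained, hence strictly less than $\mu_n^*$. Feeding $V_n$ into \eqref{varneum} now gives
\[
\mu_n\le\sup_{0\neq w\in V_n}\frac{\int_W|\nabla w|^2\,dx\,dy}{\int_{F'}w^2(x,0)\,dx}=\sup_{0\neq v^*\in E_n}Q(v^*)<\mu_n^*,
\]
which is the assertion.

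The main obstacle is not a single deep estimate but a bundle of regularity facts that must be checked carefully: that the $F'$-trace is unchanged under restriction from $W^*$ to $W$ (locality of the trace, using that $W$ is Lipschitz up to $F'$); that $W^*\setminus\overline W\neq\emptyset$ for a proper inclusion of Lipschitz domains; and that the eigenfunction traces on $F'$ are linearly independent, which rests on the spectral theory of the sloshing operator on $F'$. If one prefers to stay closer to the proof of Proposition~\ref{monot:dir}, one may instead run Weinberger's abstract argument \cite[Theorem 2.3]{W} along a chain $W=W_1\subsetneq\dots\subsetneq W_k=W^*$ of Lipschitz domains sharing the common free surface $F$; but the direct restriction argument above is shorter and makes transparent why strictness requires $n\ge 2$.
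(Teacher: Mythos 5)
Your proof is correct and follows essentially the same route as the paper: restrict the span of the first $n$ Steklov--Neumann eigenfunctions of $W^*$ to $W$, use unique continuation of harmonic functions for linear independence, and exploit that the trace on $F$ is unchanged while the Dirichlet energy can only drop. The only difference is in how strictness is finished off --- the paper argues by contradiction (equality would force $\nabla v^*$ to vanish on $W^*\setminus W$, hence $v^*$ constant by unique continuation, hence $\mu_n=\mu_n^*=0$, impossible for $n\ge 2$), whereas you argue directly via real-analyticity of non-constant harmonic functions together with a compactness argument on the trace-normalized sphere of the span; both are valid.
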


\begin{proof} Let $v_1^*,\dots,v_n^*$ be the first $n$ eigenfunctions of
the problem \eqref{Neumann} on  $W^*$, $n\ge 2$. Consider the
restrictions $v_1,\dots, v_n$ of these functions on the domain $W$.
Clearly, they are linearly independent: if some linear combination
of $v_1,\dots, v_n$ vanishes on $W$ it should vanish on the whole
$W^*$ by unique continuation property of harmonic functions. Take
the subspace generated by  $v_1,\dots,v_n$ and plug it in the
variational characterization \eqref{varneum} for $\mu_n$. Suppose,
by contradiction, that there exists an element $v$ of this subspace
such that
\begin{equation}
\label{contrad} \frac{\int_{W} |\nabla v(x,y)|^2\,dx\,dy}{\int_{F}
v^2(x,0)\, dx} \ge \mu_n^*.
\end{equation}
Let $v^*$ be the extension of $v$ to $W^*$ --- that is,  the
corresponding linear combination of $v_1^*,\dots, v_n^*$. The
denominator of its Rayleigh quotient is exactly the same as in
\eqref{contrad},  since the boundaries of the domains $W$ and $W^*$
have the same ``free surface'' $F$. At the same time, since
$v^*|_{W}=v$ and $W \subset W^*$, we immediately get
$$
\int_{W^*} |\nabla v^*(x,y)|^2\,dx\,dy \ge \int_{W} |\nabla
v(x,y)|^2\,dx\,dy.
$$
Comparing this with \eqref{contrad} and using the variational
characterization for $\mu_n^*$, one gets that the inequality above
has to be an equality. Therefore, $\nabla v$ vanishes on
$W^*\setminus W$ and hence $v^*$ is constant everywhere on $W^*$ by
the unique continuation property. Therefore, $\mu_n=\mu_n^*=0$ which
is impossible for $n\ge 2$ (note that for $n=1$ this is indeed
true). This completes the proof of Proposition \ref{monot:neum}.
\end{proof}
\begin{remark} Proposition \ref{monot:dir} is not surprising.
The similar property holds for  Dirichlet eigenvalues of the
Laplacian and its proof is exactly the same. On the other hand,
Proposition \ref{monot:neum} is somewhat unexpected at first glance since
the Neumann eigenvalues of the Laplacian do not have the domain
monotonicity property. Even more counterintuitive in this case is the fact
that monotonicity holds in the ``unusual'' direction, namely that smaller
sets have smaller eigenvalues.
\end{remark}
\section{Proofs of the main results}
\subsection{Proof of Theorem \ref{main2}}
\label{main2:proof}
We develop the idea used in \cite[Theorem 2.6]{KK}. Let $W \subset
\Rd$ be a domain satisfying the standard John's condition. Then
there exists $L>0$ such that $W\subset F' \times (-L,0)$. The result
then immediately follows from Propositions \ref{monot:neum} and
\ref{monot:dir} combined with the results of section
\ref{cylindrical}, where the assertions of Theorem \ref{main2} have
been established for cylindrical domains.
\begin{remark} \label{inf:cyl}
If $W$ is an unbounded domain satisfying the standard John's
condition (with the sloshing problem being understood in the
appropriate sense, see Remark \ref{rem:inf}), one has to to set
$L=\infty$, i.e. consider a semi--infinite cylindrical domain. Note
that in two dimensions $\lambda_n=\mu_{n+1}$, $n=1,2,\dots$ for the
semi--infinite strip. Hence, in order to prove  Theorem \ref{main2},
it is necessary to use strict domain monotonicity of eigenvalues for
either the Steklov-Dirichlet or the Steklov-Neumann problem. Note
that Propositions \ref{monot:dir} and \ref{monot:neum} remain true
for mixed Steklov eigenvalues of unbounded domains: one can check
that the proofs go through without changes.

The same example also shows that even a slight violation of  the
standard John's condition may force Theorem \ref{main2} to fail.
Indeed, by strict domain monotonicity,  an arbitrary enlargement of
a semi--infinite cylindrical domain in two dimensions away from the
line $\{y=0\}$ yields $\lambda_n < \mu_{n+1}$.
\end{remark}
\subsection{Proof of Theorem \ref{main}}
\label{main:proof} The argument presented below is an adaptation of
the method introduced in \cite{F}.

Recall that the eigenfunctions $\{v_n\}_{n=1}^{\infty}$ of the
Neumann problem (\ref{Neumann}) belong to the Sobolev space $H^1(W)$
and that they may be chosen so that $\{v_n(x,0)\}_{n = 1}^{\infty}$ is an
orthonormal basis in $L^2(F')$.

Moreover, if we define the Neumann counting function by
$$\Lambda_N(\mu) = \# \{\mu_n: \, \mu_n \le \mu\}$$ we have
\begin{equation} \label{NN} \Lambda_N(\mu) = \max \,\{\dim(L): \,
\frac{\int_W |\nabla v(x,y)|^2 \, dx \, dy}{\int_{F'} v^2(x,0) \,
dx} \le \mu, \, \, \, v \in L\},
\end{equation}
where the maximum is taken over all linear subspaces $L$ of
$H^1(W)$. This follows from the variational principle
\eqref{varneum}.


The eigenfunctions of the Steklov--Dirichlet problem
(\ref{Dirichlet}) $\{u_n\}_{n=1}^{\infty}$ also belong to $H^1(W)$
and, similarly, $u_n$ may be chosen in such a way that
$\{u_n(x,0)\}_{n = 1}^{\infty}$ is an orthonormal basis in
$L^2(F')$.

As in section \ref{mixed},  we use the notation
\begin{eqnarray*}
H_0^1(W,B) = \{u \in H^1(W): \, u \equiv 0 \quad \text{on} \quad
B\}.
\end{eqnarray*}
For any $\mu \in \mathbb{R}$,  let $K_N(\mu)$ be the corresponding
eigenspace of the problem (\ref{Neumann}) if $\mu$ is an eigenvalue,
and let $K_N(\mu) = \{0\}$ otherwise.

Denote by $\mathcal{H} = \{(x,y): \, x \in \Rdm, \, y < 0\}$ the
lower half-space of $\Rd$. The following lemma will be used in the
sequel.

\begin{lemma}
\label{auxiliary} Let $W \subset \mathcal{H}$ be a domain satisfying
the assumptions (I) and (II) of section \ref{mixed}. Then
$$
H_0^1(W,B) \cap K_N(\mu) = \{0\}.
$$
for any $\mu
> 0$.
\end{lemma}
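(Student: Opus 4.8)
The plan is to show that a nonzero function $w$ lying in both $H_0^1(W,B)$ and $K_N(\mu)$ for some $\mu>0$ cannot exist, by deriving a contradiction after extending $w$ harmonically to the whole lower half-space $\mathcal H$. First I would take such a $w$: it is harmonic in $W$, satisfies $\partial w/\partial y = \mu w$ on $F$, vanishes on $B=\partial W\setminus F$, and, being in $H_0^1(W,B)$, has finite Dirichlet energy $\int_W|\nabla w|^2$. The idea is that the Dirichlet condition on $B$ together with the Steklov condition on $F$ over-determines $w$ in a way that forces $w\equiv 0$. The natural mechanism is reflection/extension: define $\tilde w$ on $\mathcal H$ by setting $\tilde w = w$ on $W$ and $\tilde w = 0$ on $\mathcal H\setminus \overline W$. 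Because $w$ vanishes on $B$ (in the $H^1$ trace sense) and $w\in H^1(W)$, the extension $\tilde w$ belongs to $H^1_{\mathrm{loc}}(\mathcal H)$ — this is the standard fact that extension by zero across a boundary portion where the trace vanishes preserves weak differentiability.

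Next I would examine what equation $\tilde w$ satisfies across $\partial\mathcal H=\{y=0\}$, i.e. on all of $\mathbb R^{d-1}\times\{0\}$. On the part of $\{y=0\}$ outside $\overline{F'}$, $\tilde w$ vanishes identically near that part (since $W\subset\{y<0\}$ and its closure meets $\{y=0\}$ only along $\overline F$), so there is nothing to check there. On $F'$ itself, $\tilde w(x,0)=w(x,0)$ and $\partial_y\tilde w(x,0^-)=\partial_y w(x,0)=\mu\,w(x,0)$. Therefore $\tilde w$ is the (weak) solution in $\mathcal H$ of $\Delta \tilde w = 0$ off $\{y=0\}$ together with the Robin-type transmission/boundary relation $\partial_y\tilde w = \mu\,\tilde w$ on $F'$ and $\tilde w=0$ on $\mathbb R^{d-1}\times\{0\}\setminus F'$. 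The key point: on the open set $\mathcal H\setminus\overline W$ the function $\tilde w$ is identically zero, and this open set is nonempty and shares an open portion of boundary with $W$ (namely an open piece of $B$, or more robustly a lower-dimensional analytic argument). Since $\tilde w$ is harmonic on the connected open set $\mathcal H\setminus(\mathbb R^{d-1}\times\{0\})$ restricted to the lower half, and vanishes on an open subset of it, unique continuation for harmonic functions forces $\tilde w\equiv 0$ on the connected component of $\mathcal H\cap\{y<0\}$ containing $W$ — hence $w\equiv 0$ on $W$, a contradiction.

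A cleaner route that avoids any worry about the topology of $\mathcal H\setminus\overline W$ is the energy argument à la Filonov: reflect. Define $W^-=W$, and let $W^+$ be the reflection of $W$ across $\{y=0\}$; glue along $F$ to form an open set $\Omega=W\cup F\cup W^+$ in $\mathbb R^d$. Extend $w$ to $W^+$ by the \emph{odd} reflection $w(x,-y)=-e^{?}\dots$ — actually here one should instead use the even/odd trick adapted to the Steklov parameter: since the Steklov condition $\partial_y w=\mu w$ is not a pure Neumann or Dirichlet condition, direct reflection does not give a harmonic extension. So I would keep the first approach (extension by zero into $\mathcal H$) rather than reflection. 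The cleanest statement to invoke is: $\tilde w\in H^1_{\mathrm{loc}}(\mathcal H)$ is harmonic in the open set $\mathcal H\setminus\overline{F}$ (no boundary condition there at all, since $\tilde w=0$ there), in particular real-analytic, and it vanishes on the nonempty open set $\mathcal H\setminus\overline W$; by the unique continuation / identity theorem for real-analytic functions on the connected open set $\mathcal H\setminus\overline{F}$, we get $\tilde w\equiv 0$ there, whence $w\equiv0$ on $W$.

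The step I expect to be the main obstacle is the justification that $\tilde w$, defined by zero extension, is genuinely harmonic (and hence analytic) across the interface $B$ — equivalently, that no spurious distributional mass (a single layer on $B$) is created. This follows because $w\in H^1(W)$ with $w|_B=0$ in the trace sense implies $\tilde w\in H^1_{\mathrm{loc}}(\mathcal H)$, and then for any test function $\varphi\in C_c^\infty(\mathcal H\setminus\overline F)$ one computes $\int \tilde w\,\Delta\varphi = \int_W w\,\Delta\varphi = -\int_W\nabla w\cdot\nabla\varphi + \int_{\partial W}\varphi\,\partial_\nu w$; the boundary term over $F$ vanishes because $\varphi$ is supported away from $\overline F$, and the remaining integral $-\int_W\nabla w\cdot\nabla\varphi = -\int_{\mathcal H}\nabla\tilde w\cdot\nabla\varphi = \int_{\mathcal H}\tilde w\,\Delta\varphi$ would need care — the honest statement is simply that $\tilde w\in H^1_{\mathrm{loc}}(\mathcal H)$ is weakly harmonic in $\mathcal H\setminus\overline F$ because its restriction to each of the two open pieces $W$ and $\mathcal H\setminus\overline W$ is harmonic and the zero-extension has no jump in $H^1$ across the shared Lipschitz interface $B$. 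Granting elliptic regularity (Weyl's lemma), $\tilde w$ is real-analytic there, and the unique continuation conclusion is immediate. I would present this as the one careful lemma and keep the rest brief.
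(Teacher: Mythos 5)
Your overall strategy --- extend by zero to the half-space $\mathcal H$, show the extension is weakly harmonic there, and conclude by unique continuation --- is exactly the paper's. But the step you yourself flag as ``the main obstacle'' is where the argument breaks, and the substitute you offer does not work. Your ``honest statement'' is that a function in $H^1_{\mathrm{loc}}$ which is harmonic on each of the two open pieces $W$ and $\mathcal H\setminus\overline W$ and has no jump across the Lipschitz interface $B$ is weakly harmonic across $B$. This is false: $\max(y,0)$ on $\Rt$ lies in $H^1_{\mathrm{loc}}$, is harmonic on $\{y>0\}$ and on $\{y<0\}$, and is continuous across $\{y=0\}$, yet $\Delta\max(y,0)$ is the single-layer distribution on $\{y=0\}$. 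Continuity of the function across the interface only removes the double layer; a single layer $[\partial_\nu w]\,\delta_B$ survives unless the normal derivatives from the two sides agree. In your own integration by parts the uncontrolled term is precisely $\int_B \varphi\,\partial_\nu w\,d\sigma$, which you never dispose of.

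What is missing is the one part of the hypothesis $w\in K_N(\mu)$ that you never record or use: a Steklov--\emph{Neumann} eigenfunction satisfies $\partial w/\partial\nu=0$ on $B$ (you list only harmonicity, the Steklov condition on $F$, and the vanishing of $w$ on $B$, the last of which comes from $H_0^1(W,B)$, not from $K_N(\mu)$). With the Neumann condition in hand, for every $\psi\in C_0^\infty(\mathcal H)$ one gets
$\int_{\mathcal H}\nabla\tilde w\cdot\overline{\nabla\psi}
=\int_W\nabla w\cdot\overline{\nabla\psi}
=\int_{\partial W}\frac{\partial w}{\partial\nu}\,\overline{\psi}\,d\sigma=0$,
since the integrand vanishes on $B$ by the Neumann condition and $\psi$ vanishes on $F\subset\partial\mathcal H$. (Note also that $\overline F\cap\mathcal H=\emptyset$, so there is no need to excise $\overline F$ from the test-function class or to discuss a Robin transmission condition on $F'$; one works directly in $\mathcal H$, which is connected.) Weyl's lemma and unique continuation then finish the proof as you describe. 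The lemma is genuinely about the \emph{intersection} $H_0^1(W,B)\cap K_N(\mu)$: it is the simultaneous vanishing of $w$ and $\partial w/\partial\nu$ on $B$ that over-determines $w$, and an argument using only $w|_B=0$ cannot succeed.
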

\begin{proof}
Let $v \in H_0^1(W,B) \cap K_N(\mu)$. Consider the function $w:
\mathcal{H} \to \R$ defined  by
\begin{equation*}
w(x,y) = \left\{
\begin{array}{ll}
\displaystyle
v(x,y), & (x,y) \in W,\\
\displaystyle 0, & (x,y) \in \mathcal{H} \setminus W.
\end{array}
\right.
\end{equation*}
Since $w\in H^1(\mathcal{H})$, for any $\psi \in
C_0^{\infty}(\mathcal{H})$ we have by the Green's formula:
\begin{equation}
\label{byparts0}
 \int_H \nabla w\,
\overline{\nabla \psi} \, dx \, dy = \int_W \,\nabla v\,
\overline{\nabla \psi} \, dx \, dy =\int_{\partial{W}}
\frac{\partial v}{\partial{\nu}} \,\overline{\psi} \, d\sigma = 0,
\end{equation}
where $\sigma$ is the $(d-1)$-dimensional Lebesgue measure on
$\partial W$. It is used here that $v \subset K_N(\mu)$ is harmonic
and $\partial v/\partial \nu$ vanishes on $B \subset
\partial W$, while $\psi$ vanishes on $\partial W\setminus B \subset
\partial \mathcal{H}$. It is well-known that a weakly harmonic
function is harmonic, and therefore \eqref{byparts0} implies that
$\Delta w \equiv 0$ in $\mathcal{H}$. Since $\mathcal{H}\setminus W$
has a nonempty interior, the relation $w \equiv 0$ on $\mathcal{H}
\setminus W$ implies $w \equiv 0$ on $\mathcal{H}$. This completes
the proof of the lemma.
\end{proof}

Let us now fix an arbitrary $k \in \N$ and set $\mu = \lambda_k$.
Take $U = \spana{\{u_1,\ldots,\- u_k\}}$, where  $u_n$ are the
eigenfunctions of the mixed Steklov--Dirichlet problem
(\ref{Dirichlet}). We have $U \subset H_0^1(W,B) \subset H^1(W)$ and
$\dim(U) = k$. For any $u \in U$, we have
\begin{multline}
\label{WF} \int_W |\nabla u(x,y)|^2 \, dx \, dy = \int_{F'}
\frac{\partial u}{\partial y}(x,0) u(x,0) \, dx \\ \le \mu \int_{F'}
u^2(x,0) \, dx.
\end{multline}

The rest of the proof is split into two cases: (i) $d \ge 3$ and
(ii) $d = 2$.

\medskip

\noindent {\bf Case (i),  $d \ge 3$}.

\smallskip

By Lemma \ref{auxiliary} we get that $U \dot{+} K_N(\mu)$ is a
direct sum. Given $\mu > 0$, consider the family of exponential
functions
$$
\Big\{e^{i \omega x} e^{\mu y}: |\omega| = \mu, \, \omega \in \Rdm, \quad (x,y) \in \Rdm \times (-\infty,0]\Big\}.
$$
It is well-known that these functions are linearly independent. Thus
there exists $\omega \in \Rdm$, $|\omega| = \mu$ such that $e^{i
\omega x} e^{\mu y}$ does not belong to $U \dot{+} K_N(\mu)$. Set
$$
G = U \dot{+} K_N(\mu) \dot{+} \Big\{c e^{i \omega x} e^{\mu y}:
c\in \cC\Big\} \subset H^1(W).
$$
Since $W$ satisfies the weak John's condition \eqref{condition}, we
have
\begin{eqnarray}
\label{eomega} \int_{W} |\nabla(c e^{i \omega x} e^{\mu y})|^2 \, dx
\, dy
= 2 \mu^2 |c|^2 \int_W e^{2 \mu y} \, dx \, dy  \\
\le \mu |c|^2 |F'|\nonumber  = \mu \int_{F'} |c e^{i \omega x}
e^{\mu\cdot 0}|^2 \, dx.
\end{eqnarray}

Let $u + v + c e^{i \omega x} e^{\mu y},$
 be an element of $G$, where $u \in U$, $v \in K_N(\mu)$. We have
\begin{eqnarray}\label{I,II}
&& \int_W |\nabla(u(x,y) + v(x,y) + c e^{i \omega x} e^{\mu y})|^2
\, dx \, dy\, = \nonumber \\ &&
 \int_W |\nabla u(x,y)|^2 + |\nabla v(x,y)|^2  |\nabla(c e^{i \omega x} e^{\mu y})|^2\, dx \, dy
 + \nonumber \\ &&
 2\, \text{Re} \, \,
\int_W \nabla v(x,y) \overline{\nabla(u(x,y) +  c e^{i \omega x}
e^{\mu y})}  +  \nonumber \\ && \nabla(c e^{i \omega x} e^{\mu y})
\overline{\nabla u(x,y)} \, dx \, dy\, = \mathcal{I}_1 + 2\,
\text{Re} \,\, \mathcal{I}_2,
\end{eqnarray}
where $\mathcal{I}_1$ and $\mathcal{I}_2$ denote, respectively, the
first and the second integral in the right hand side of
\eqref{I,II}. By \eqref{WF}, \eqref{eomega} and  the definition of
$K_N(\mu)$, we have
\begin{equation}\label{I}
\mathcal{I}_1 \le \mu \int_{F'} u^2(x,0) + v^2(x,0) + |c e^{i \omega
x} e^{\mu \cdot 0}|^2\, dx.
\end{equation}
Note that the functions $u$, $v$ and $e^{i \omega x} e^{\mu y}$ are
harmonic in $W$.  Furthermore, $u \equiv 0$ and $\partial v/
\partial \nu \equiv 0$ on $B$. Hence, integrating by parts, we
get
\begin{eqnarray}\label{II}
&& \mathcal{I}_2 =
\int_{F'} \frac{\partial v}{\partial y}(x,0) \overline{u(x,0) + c e^{i \omega x}} + \frac{\partial}{\partial y}(c e^{i \omega x} e^{\mu y})\mid_{y = 0} \overline{u(x,0)} \, dx\nonumber  \\
&& +
\int_{B} \frac{\partial v}{\partial \nu}(x,y) \overline{u(x,y) + c e^{i \omega x}e^{\mu y}} + \left( \frac{\partial}{\partial \nu}(c e^{i \omega x} e^{\mu y})\right) \overline{u(x,y)} \, d\sigma(x,y) \nonumber\\
&& -
\int_{W} \Delta v(x,y) \overline{u(x,y) + c e^{i \omega x}e^{\mu y}} + \Delta(c e^{i \omega x} e^{\mu y}) \overline{\nabla u(x,y)} \, dx \, dy \nonumber \\
&& = \mu\, \int_{F'} v(x,0) \overline{u(x,0) + c e^{i \omega x}} + c
e^{i \omega x}  \overline{u(x,0)} \, dx.
\end{eqnarray}
It follows from (\ref{I,II}), (\ref{I}) and (\ref{II}) that
\begin{multline*}
\int_W |\nabla(u(x,y) + v(x,y) + c e^{i \omega x} e^{\mu y})|^2 \,
dx \, dy \le \\ \mu \int_{F'} |u(x,0) + v(x,0) + c e^{i \omega x}
e^{\mu \cdot 0}|^2 \, dx.
\end{multline*}
Therefore, from (\ref{NN}) we have
$$
\Lambda_N(\mu) \ge \dim G = k + \dim K_N(\mu) + 1.
$$
Since $\mu = \lambda_k$, we get
$$
\# \{\mu_n: \mu_n < \mu\} = \Lambda_N(\mu) - \dim K_N(\mu) \ge k +
1,
$$
which implies $\mu_{k + 1} < \lambda_k$.

\bigskip

\noindent {\bf Case (ii), $d = 2$.}

\smallskip

Consider the functions $e^{i\mu x} e^{\mu y}$, $(x,y) \in \R \times
(-\infty,0]$. Note that this function does not belong to $U$ because
it does not vanish on $B$. Set
$$G = U \dot{+} \Big\{c e^{i\mu x} e^{\mu y}: c\in \cC\Big\} \subset H^1(W)$$
By the same estimates as in the case $d \ge 3$, we obtain that for
any $u + c e^{i\mu x} e^{\mu y}$, where $u \in U$,
$$
\int_W |\nabla(u(x,y) + c e^{i \mu x} e^{\mu y})|^2 \, dx \, dy \le
\mu \int_{F'} |u(x,0) + c e^{i \mu x} e^{\mu \cdot 0}|^2 \, dx.
$$
Hence,  $\Lambda_N(\mu) \ge \dim\{G\} = k + 1$. Since $\mu =
\lambda_k$, we get
$$
\# \{\mu_n: \mu_n \le \mu\} = \Lambda_N(\mu) \ge k + 1,
$$
which implies $\mu_{k + 1} \le \lambda_k$. This completes the proof
of Theorem~\ref{main}.

\begin{remark} Theorem \ref{main} can be viewed as a generalization
of the main result of \cite{F}. Indeed, in order to obtain the
classical inequalities between the Neumann and Dirichlet eigenvalues
one has to apply Theorem \ref{main} to a cylindrical domain of depth
$L$ and take $L\to \infty$ (see section \ref{cylindrical}). Note
that unlike the proof of Theorem \ref{main2}, the proof of Theorem
\ref{main} uses the methods of \cite{F}, but not the results
themselves.
\end{remark}
\begin{remark} It is likely that for $d=2$,  the assertion of
Theorem~\ref{main} could be replaced by a strict inequality.
However, this can not be proved using our argument.
\end{remark}


\subsection{Proof of Lemma \ref{lemma:nodal}}
\label{nodal:proof} Let $\phi$ be an eigenfunction of the
Steklov--Neumann problem \eqref{Neumann} with the eigenvalue $\mu$
and let $C$ be a connected component of its
nodal set. First, note that
$C\cap
\partial W \neq \emptyset$. Indeed, otherwise $C$ would enclose a bounded
domain, and the harmonic function $\phi$ would vanish on its
boundary, implying $\phi \equiv 0$.

Suppose that $C$ has a non-empty intersection only with the part
$\overline{F}$ of the boundary. Consider the domain $D$ bounded by
$C$ and $\overline{F}$. Since $\phi$ does not change sign inside
$D$, the eigenvalue $\mu$ is the first eigenvalue of the
Steklov--Dirichlet problem in $D$. By Proposition \ref{monot:dir} we
get $\mu > \lambda_1$, where $\lambda_1$ is the first
Steklov--Dirichlet eigenvalue of $W$. This is a contradiction with
the assumption $\mu \le \lambda_1$ of the lemma. Hence, $C \cap B_0
\neq \emptyset$.

Suppose now that $d=2$. Then $C$ is a curve and, by the argument
above, one of its ends belongs to the set $B_0$. Suppose that the
other end of $C$ coincides with one of the end-points of the
interval $F'$. Then $\phi$ is an eigenfunction of a mixed
Dirichlet--Neumann eigenvalue problem on the domain bounded by $B$
and $C$ with the eigenvalue zero. But then $\phi \equiv {\rm
const}$, hence $\phi \equiv 0$, and we get a contradiction. This
completes the proof of Lemma \ref{lemma:nodal}.

\vskip 10 pt

\noindent \textbf{Acknowledgments.} The authors would like to thank
A.~Girouard and  N. Kuznetsov for useful discussions.


\begin{thebibliography}{99}

\bibitem{A} M. S. Agranovich, {\it On a mixed Poincare--Steklov type spectral problem in a Lipschitz domain}, Russ. J. Math. Phys. 13(3), (2006), 281-290.


\bibitem{BK} R. Ba\~nuelos, T. Kulczycki, {\it The Cauchy process and the Steklov problem}, J. Funct. Anal. 211 (2004), no. 2, 355--423.

\bibitem{Br} F. Brock, {\it An isoperimetric inequality for eigenvalues of the Stekloff Problem}, Z. Angew. Math. Mech. 81 (2001), 69-71.

\bibitem{CF} L. Caffarelli, A. Friedman, {\it Partial regularity of
solutions of linear and superlinear elliptic equations}, J. Diff.
Equations 60 (1985), 420-433.

\bibitem{Davies} E. B. Davies, {\it Spectral theory and
differential operators}, Cambridge University Press, 1995.

\bibitem{DS} B. Dittmar, A. Yu. Solynin, {\it The mixed Steklov eigenvalue problem and new extremal properties of the Gr{\"o}tzsch ring}, J. Math. Sci.  115(2) (2003), 2119-2134

\bibitem{E} L. Euler, {\it Principia motus fluidorum}, Novi Commentarii Academiae Scientiarum Imperialis Petropolitanae, Tom. VI (1761), 271-311.

\bibitem{F} N. Filonov, {\it On an inequality between Dirichlet and Neumann eigenvalues for the Laplace operator},
St. Petersburg Math. J. 16(2) (2005), 413-416.

\bibitem{FK}
D. W. Fox, J. R. Kuttler, {\it Sloshing frequencies}, Z. Angew.
Math. Phys. 34 (1983), 668--696.

\bibitem{FS} A. Fraser, R. Schoen, {\it The first Steklov
eigenvalue, conformal geometry, and minimal surfaces},
arXiv:0912:5392.


\bibitem{Fri} L. Friedlander, {\it Some inequalities between Dirichlet and Neumann Eigenvalues}. Arch. Rational Mech. Aanl. 116 (1991), 153-160.

\bibitem{GP} A. Girouard, I. Polterovich, {\it On the
Hersch--Payne--Schiffer inequalities for Steklov eigenvalues},
arXiv:0808:2968, to appear in Func. Anal. Appl.

\bibitem{HTW} P.~Henrici, B.~Troesch, L.~Wuytack, {\it Sloshing frequencies for a half-space with circular or strip-like aperture},
Z. Angew. Math. Phys. 21 (1970), 285-318.

\bibitem{I} R. Ibrahim, {\it Liquid sloshing dynamics. Theory and
applications}, Cambridge University Press, 2005.


\bibitem{KKr} N. D. Kopachevsky, S. G. Krein, {\it Operator
approach to linear problems of hydrodynamics, Vol. 1: Self--adjoint
problems for an ideal fluid}, Birkh\"auser, 2001.

\bibitem{KoK} V. Kozlov, N. Kuznetsov, {\it The ice-fishing
problem: the fundamental sloshing frequency versus geometry of
holes}, Math. Meth. Appl. Sci. 27 (2004), 289--312.

\bibitem{KKM} V. Kozlov, N. Kuznetsov, O. Motygin, {\it On the
two-dimensional sloshing problem}, Proc. Roy. Soc. London A. 460 (2004), 2587--2603.

\bibitem{KK} T. Kulczycki and N. Kuznetsov, {\it `High spots' theorems for sloshing problems}, Bull. Lond. Math. Soc.  41(3) (2009), 495--505.

\bibitem{K} J.R. Kuttler, {\it A nodal line theorem for the sloshing
problem}, SIAM J. Math. Anal. 15 (1984), No. 6, 1234-1237.


\bibitem{KMV} N.~Kuznetsov, V.~Maz'ya, B.~Vainberg, {\it Linear
water waves}, Cambridge University Press, 2002.

\bibitem{KM} N. G. Kuznetsov, O. V. Motygin, {\it The Steklov
problem in a half-plane: the dependence of eigenvalues on a
piecewise constant coefficient}, J. Math. Sciences 127 (2005),
2429--2445.

\bibitem{L} H. Lamb,  {\it{Hydrodynamics}}, Cambridge University Press,
1932.

\bibitem{LW} H. Levine and H. Weinberger, {\it Inequalities between Dirichlet and Neumann eigenvalues},
Arch. Rational Mech. Anal. 94 (1986), no. 3, 193--208.

\bibitem{McI} P. McIver, {\it Sloshing frequencies for cylindrical and spherical containers filled to  an arbitrary depth}, J. Fluid Mech. 201 (1989), 243--257.

\bibitem{M} J. W. Miles,
{\it On the eigenvalue problem for fluid sloshing in a half-space}, Z. Angew. Math. Phys. 23 (1972), 861--869.

\bibitem{Mo} N. N. Moiseev, {\it Introduction to the theory of oscillations of liquid-containing bodies},  Advances in Applied Mechanics, Vol. 8 pp. 233--289 Academic Press, New York (1964).

\bibitem{Pay} L.E.Payne, {\it Inequalities for eigenvalues of plates and membranes}, J. Rational Mech. Anal. 4 (1955), 517-529.

\bibitem{Pleijel} \r{A}. Pleijel, Remarks on Courant's nodal line
theorem, Comm. Pure Appl. Math. 9 (1956), 543-550.

\bibitem {Po} G. P\'olya, {\it Remarks on the foregoing paper}, J. Math. Phys. 31 (1952), 55-57.

\bibitem{S} G. Szeg{\"o}, {\it Inequalities for certain eigenvalues of a membrane of a given area}, J. Rational Mech. Anal. 3 (1954), 343-356.

\bibitem{W} W. Welsh, {\it Monotonicity for eigenvalues of the
Schr\"odinger operator on unbounded domains}, Arch. Rational Mech.
Anal. 49 (1972/73), 129-136.

\end{thebibliography}
\end{document}